\documentclass{amsart}

\usepackage{setspace}

\usepackage{amsrefs}
\usepackage[nospace,noadjust]{cite}
\usepackage[dvipsnames]{xcolor}
\usepackage{graphicx}
\usepackage{caption}
\usepackage{mathrsfs}  
\usepackage{subcaption}
\captionsetup[subfigure]{labelfont=rm}
\usepackage{amsmath} 
\usepackage{amssymb}
\usepackage{listings}
\usepackage{booktabs}
\usepackage{tikz}
\usepackage{lipsum}
\usepackage{mdwlist}

\newtheorem{theorem}{Theorem}[section]
\newtheorem{lemma}[theorem]{Lemma}
\newtheorem{corollary}[theorem]{Corollary}
\newtheorem{proposition}[theorem]{Proposition}

\newtheorem{problem}{Problem}

\theoremstyle{definition}
	\newtheorem{definition}[theorem]{Definition}
	\newtheorem{example}[theorem]{Example}

\theoremstyle{remark}
	\newtheorem{remark}[theorem]{Remark}

\numberwithin{equation}{section}


\renewcommand{\Re}{\operatorname{Re}}

\copyrightinfo{2020}{Michel L. Lapidus, Machiel van Frankenhuijsen, Edward K. Voskanian}


\hyphenation{Frank-en-huijs-en}

\begin{document}

\title[]{Quasiperiodic Patterns of the Complex Dimensions of Nonlattice
Self-Similar Strings, \\ 
via the LLL Algorithm}

\author{Michel L. Lapidus}
\address{Department of Mathematics, University of California \newline 
\indent Riverside, California 92521, USA}
\email{lapidus@math.ucr.edu}

\author{Machiel van Frankenhuijsen}
\address{Department of Mathematics, Utah Valley University \newline  
\indent Orem, Utah 84058, USA}
\email{vanframa@uvu.edu}

\author{Edward K. Voskanian}
\address{Department of Mathematics \& Statistics, The College of New Jersey \newline 
\indent Ewing, New Jersey 08618, USA}
\email{voskanie@tcnj.edu}

\keywords{Lattice and nonlattice self-similar strings, Diophantine approximation, geometric zeta function, complex dimensions, Dirichlet polynomial, roots of Dirichlet polynomials, lattice case, nonlattice case, Lattice String Approximation (LSA) algorithm, quasiperiodic structure and patterns, simultaneous Diophantine approximation, LLL algorithm, algorithm of Lenstra, Lenstra and Lov\'asz.}

\begin{abstract}
The Lattice String Approximation algorithm (or LSA algorithm) of M. L. Lapidus and M. van Frankenhuijsen is a procedure that approximates the complex dimensions of a nonlattice self-similar fractal string by the complex dimensions of a lattice self-similar fractal string. The implication of this procedure is that the set of complex dimensions of a nonlattice string has a quasiperiodic pattern. Using the LSA algorithm, together with the multiprecision polynomial solver MPSolve which is due to D. A. Bini, G. Fiorentino and L. Robol, we give a new and significantly more powerful presentation of the quasiperiodic patterns of the sets of complex dimensions of nonlattice self-similar fractal strings. The implementation of this algorithm requires a practical method for generating simultaneous Diophantine approximations, which in some cases we can accomplish by the continued fraction process. Otherwise, as was suggested by Lapidus and van Frankenhuijsen, we use the LLL algorithm of A. K. Lenstra, H. W. Lenstra, and L. Lov\'asz. 
\end{abstract}

\maketitle

\section{Introduction} \label{introduction}
From 1991 to 1993, Lapidus (in the more general and higher-dimensional case of fractal drums), as well as Lapidus and Pomerance established connections between complex dimensions and the theory of the Riemann zeta function by studying the connection between fractal strings and their spectra; see \cite{lapidus1991fractal}, \cite{lapidus1993vibrations} and \cite{lapidus1993the}. Then, in \cite{lapidus1995the}, Lapidus and Maier used the intuition coming from the notion of complex dimensions in order to rigorously reformulate the Riemann hypothesis as an inverse spectral problem for fractal strings. The notion of complex dimensions was precisely defined and the corresponding rigorous theory of complex dimensions was fully developed by Lapidus and van Frankenhuijsen, for example in \cite{lapidus2000fractal,lapidus2003complex,lapidus2006fractal,lapidus2012fractal}, in the one-dimensional case of fractal strings. Recently, the higher-dimensional theory of complex dimensions was fully developed by Lapidus, Radunovi\'c and {\v Z}ubrini\'c in the book \cite{lapidus2017fractal} and a series of accompanying papers; see also the first author's recent survey article \cite{lapidus2019an}.

The present paper focuses, in particular, on self-similar strings (and their natural generalizations), the boundary of which is a self-similar set 
(in $\mathbb{R}$), 
satisfying a mild non-overlapping condition, as introduced and studied in \cite{lapidus2000fractal,lapidus2003complex,lapidus2006fractal,lapidus2012fractal}.

Given a closed, bounded and nonempty interval 
$I$ 
of length 
$L$, 
and 
$M \geq 2$ 
contraction similitudes of 
$\mathbb{R}$, 
\[ \Phi_1, \dots, \Phi_M:I \to I, \] 
a {\it self-similar fractal string} (or self-similar string, in short) is constructed through a procedure reminiscent of the construction of the Cantor set. In the first step, one subdivides the interval $I$ into the images 
\begin{equation} \label{pieces} 
\Phi_1(I), \dots, \Phi_M(I).
\end{equation}
If one imposes a mild separation condition on the contraction similitudes, the images (\ref{pieces}) now lie in the interval $I$, and they do not overlap, except possibly at the endpoints. Moreover, the complement in $I$ of their union consists of one or more disjoint open intervals, called the {\it first lengths}. This process is then repeated with each of the images in (\ref{pieces}), resulting in another finite collection of disjoint open intervals. The final result will be a countably infinite collection of pairwise disjoint and bounded open intervals, all contained in the original interval $I$. The union of these open intervals is the self-similar fractal string. 

From the perspective of the current paper, there is an important dichotomy in the set of all self-similar fractal strings, according to which any self-similar fractal string is either {\it lattice} or {\it nonlattice}, depending on the scaling ratios with which a self-similar fractal string is constructed.\footnote{Note the two meanings of `lattice'. On the one hand, a lattice string is a certain kind of fractal string studied in fractal geometry. On the other hand, the LLL algorithm is a generalization of Euclid's algorithm aimed at finding a reduced basis of a lattice as a discrete subgroup of $\mathbb{R}^N$ of rank $N$; see Section \ref{section 3.1}.} More specifically, the {\it lattice} (resp., {\it nonlattice}) {\it case} is when all (resp., two or more) of the logarithms of the 
$N$ 
{\it distinct} scaling ratios are rationally dependent (resp., independent), with necessarily 
$1 \leq N \leq M$. 
In other words, the multiplicative group 
$G \subseteq (0,+\infty)$ 
generated by the 
$N$ 
distinct scaling ratios is of rank 1 in the lattice case 
(that is, 
$G = r^\mathbb{Z}$, 
for some 
$r \in (0,1)$, 
called the multiplicative generator) 
and is of rank 
$\geq 2$, 
in the nonlattice case. By definition, the {\it generic nonlattice case} is when 
$N \geq 2$ 
and the rank of 
$G$ 
is equal to~ 
$N$.

In the lattice case, the complex dimensions\footnote{See Section 2 for a reminder of the definition of the complex dimensions as the poles of the `geometric zeta function' associated with a fractal string.} can be numerically obtained via the roots of certain polynomials that are typically sparse with large degrees, and lie periodically on finitely many vertical lines counted according to multiplicity. Furthermore, on each vertical line, they are separated by a positive real number ${\bf p}$, called the oscillatory period of the string. (See \cite[Chapter 2]{lapidus2000fractal}, \cite[Theorem 2.5]{lapidus2003complex}, and \cite[Theorems 2.16 and 3.6]{lapidus2012fractal}.) 

For nonlattice self-similar fractal strings, which are the main focus of the present paper, the complex dimensions cannot be numerically obtained in the same way as in the lattice case. Indeed, they correspond to the roots of a transcendental (rather than polynomial) equation. They can, however, be approximated by the complex dimensions of a sequence of lattice strings with larger and larger oscillatory periods.  The {\it Lattice String Approximation algorithm} of Lapidus and van Frankenhuijsen, referred to in this paper as the LSA {\it algorithm}, allows one to replace the study of nonlattice self-similar fractal strings by the study of suitable approximating sequences of lattice self-similar fractal strings. Using this algorithm, M. L. Lapidus and M. van Frankenhuijsen have shown that the sets of complex dimensions of nonlattice self-similar fractal strings are quasiperiodically distributed, in a precise sense (see, e.g., \cite[Theorem 3.6, Remark 3.7]{lapidus2003complex} and \cite[Section 3.4.2]{lapidus2006fractal,lapidus2012fractal}), and they have illustrated their results by means of a number of examples (see, e.g., the examples from Section 7 in \cite{lapidus2003complex} and their counterparts in Chapters 2 and 3 of \cite{lapidus2012fractal}). Following the suggestion by those same authors in the introduction of \cite{lapidus2003complex}, and in \cite[Remark 3.38]{lapidus2006fractal,lapidus2012fractal}, the current paper presents an implementation of the LSA algorithm incorporating the application of a powerful lattice basis reduction algorithm, which is due to A. K. Lenstra, H. W. Lenstra and L. Lov\'asz and is known as the LLL {\it algorithm}, in order to generate simultaneous Diophantine approximations; see \cite[Proposition 1.39]{lenstra1982factoring} and \cite[Proposition 9.4]{bremner2011lattice}. It also uses the open source software MPSolve, due to D. A. Bini, G. Fiorentino and L. Robol in \cite{bini2000design}, \cite{bini2014solving}, in order to approximate the roots of large degree sparse polynomials. Indeed, the LLL algorithm along with MPSolve allow for a deeper numerical and visual exploration of the quasiperiodic patterns of the complex dimensions of self-similar strings via the LSA algorithm than what has already been done in \cite{lapidus2000fractal, lapidus2003complex, lapidus2006fractal, lapidus2012fractal}.

In the latter part of \cite[Chapter 3]{lapidus2012fractal}, a number of mathematical results were obtained concerning either the nonlattice case with two distinct scaling ratios (amenable to the use of continued fractions) as well as the nonlattice case with three or more distinct scaling ratios (therefore, typically requiring more complicated simultaneous Diophantine approximation algorithms). In the present paper, it has become possible, in particular, to explore more deeply and accurately additional nonlattice strings with rank greater than or equal to three, i.e., those that cannot be solved using continued fractions

The rest of this paper is organized as follows. Section \ref{section 2} consists of some background on complex dimensions and self-similar fractal strings, leading up to the restatement of \cite[Theorem 3.18]{lapidus2012fractal} (see also \cite[Theorem 3.6]{lapidus2003complex}), which provides the LSA algorithm. Then, in Section \ref{section 3}, a brief overview of lattice basis reduction is given, along with a restatement and proof of \cite[Proposition 9.4]{bremner2011lattice} to illustrate how the LLL algorithm is applied to simultaneous Diophantine approximations. In the latter part of Section \ref{section 3} (see Section \ref{section 3.2.1}), we describe our implementation of the LLL algorithm for simultaneous Diophantine approximations which uses continued fractions. In Section \ref{section 4}, using our implementation of the LLL algorithm, together with MPSolve, a number of examples aimed at illustrating the quasiperiodic patterns of the complex dimensions of nonlattice self-similar fractal strings, and in the more general setting, of the roots (i.e., the zeros) of nonlattice Dirichlet polynomials, are shown and commented upon. These include examples previously studied in \cite{lapidus2000fractal,lapidus2003complex,lapidus2006fractal,lapidus2012fractal}, which can now be viewed in a new light by using our refined numerical approach, and new handpicked examples which are computationally easier to explore and for which interesting new phenomena arise. The mathematical experiments performed in the current paper, along with earlier work in \cite{lapidus2008in} and \cite{lapidus2003complex, lapidus2006fractal,lapidus2012fractal}, have led to new questions and open problems which are briefly discussed in the concluding comments section, namely, Section \ref{section 5}.

\section{Preliminary materials} \label{section 2}
An \textit{\textup(ordinary\textup) fractal string} 
$\mathcal{L}$ 
consists of a bounded open subset 
$\Omega \subset \mathbb{R}$; 
such a set 
$\Omega$
 is a disjoint union of countably many disjoint open intervals. The lengths 
\[ \ell_1, \ell_2, \ell_3, \dots \] 
of the open intervals are called the \textit{lengths} of 
$\mathcal{L}$, 
and since 
$\Omega$ 
is a bounded set, it is assumed without loss of generality that
\[ \ell_1 \geq \ell_2 \geq \cdots > 0, \]
and that 
$\ell_j \to 0$ 
as 
$j \to \infty$.
\footnote{We ignore here the trivial case when $\Omega$ is a finite union of open intervals.}  

Let 
$\sigma_{\mathcal{L}}$ 
denote the \textit{abscissa of convergence},\footnote{Note that $\lvert \ell_j^s \rvert = \ell_j^{\Re(s)}$, for every $s \in \mathbb{C}$ and all $j \in \mathbb{N}$.}  
\begin{align*}
\sigma_{\mathcal{L}} &= \inf\left\{\sigma \in \mathbb{R} : \sum_{j = 1}^\infty \lvert \ell_j^s \rvert < \infty, \text{ for every }s \in \mathbb{C}\text{ with }\Re(s) > \sigma\right\} \\
&= \inf\left\{\alpha \in \mathbb{R} : \sum_{j = 1}^\infty \ell_j^\alpha < \infty\right\},
\end{align*}
of the \textit{geometric zeta function} 
\[ \zeta_{\mathcal{L}}(s) = \sum_{j = 1}^\infty \ell_j^s \]
of 
$\mathcal{L}$. 
Since there are infinitely many lengths, 
$\zeta_{\mathcal{L}}(s)$ 
diverges at 
$s = 0$. 
Also, since 
$\Omega$ 
has finite Lebesgue measure, 
$\zeta_{\mathcal{L}}(s)$ 
converges at 
$s = 1$. 
Hence, it follows from standard results about general Dirichlet series (see, e.g.,\cite{serre1973a}) that the second equality in the above definition of $\sigma_{\mathcal{L}}$ holds, and, therefore, that 
$0 \leq \sigma_{\mathcal{L}} \leq 1$.

\begin{definition} \label{dimension of fractal string}
The {\it dimension} of a fractal string 
$\mathcal{L}$ 
with associated bounded open set 
$\Omega$, 
denoted by 
$D_\mathcal{L}$, 
is defined as the \textit{\textup(inner\textup) Minkowski dimension} of 
$\Omega$:
\[ D_{\mathcal{L}} = \inf\{\alpha \geq 0 \colon V(\varepsilon) = O(\varepsilon^{1 - \alpha})\text{, as }\varepsilon \to 0^+\}, \]
where 
$V(\varepsilon)$ 
denotes the volume (i.e., total length) of the \textit{inner tubular neighborhood} of 
$\partial\Omega$ 
with radius 
$\varepsilon$ given by
\[ V(\varepsilon) = \operatorname{vol}_1\left(\{x \in \Omega \colon d(x,\partial\Omega) < \varepsilon\}\right). \]   
\end{definition}

According to \cite[Theorem 1.10]{lapidus2012fractal} (see also \cite{lapidus1993vibrations}), the abscissa of convergence 
$\sigma_{\mathcal{L}}$ 
of a fractal string 
$\mathcal{L}$ 
coincides with the dimension 
$D_{\mathcal{L}}$ of $\mathcal{L}$: $\sigma_{\mathcal{L}} = D_{\mathcal{L}}$.

\begin{definition} \label{complex dimensions}
Suppose 
$\zeta_{\mathcal{L}}(s)$ 
has a meromorphic continuation to the entire complex plane. Then the poles of 
$\zeta_{\mathcal{L}}(s)$ 
are called the \textit{complex dimensions} of 
$\mathcal{L}$. 
\end{definition}

\begin{remark}
While the theory of complex dimensions is developed in \cite{lapidus2000fractal,lapidus2003complex,lapidus2006fractal,lapidus2012fractal} and \cite{lapidus2017fractal} for geometric zeta functions not necessarily having a meromorphic continuation to all of 
$\mathbb{C}$, 
the present paper only requires the simpler case considered in Definition \ref{complex dimensions} (which is the case, in particular, of all self-similar fractal strings). See also \cite{lapidus2019an} for a recent survey of the theory of complex fractal dimensions.
\end{remark}

The geometric importance of the set of complex dimensions of a fractal string 
$\mathcal{L}$ 
with boundary 
$\Omega$, 
which always includes its inner Minkowski dimension 
$D_{\mathcal{L}}$, 
is justified because, for example, the complex dimensions appear in an essential way in the explicit formula for the volume 
$V(\varepsilon)$ 
of the inner tubular neighborhood of the boundary 
$\partial\Omega$; 
see the corresponding ``fractal tube formulas'' obtained in Chapter 8 of \cite{lapidus2012fractal}. Accordingly, the complex dimensions give very detailed information about the intrinsic oscillations that are inherent to fractal geometries; see also Remark \ref{definition of fractal} below. The current paper, however, deals with the complex dimensions viewed only as a discrete subset of the complex plane, and the focus is on the special type of fractal strings that are constructed through an iterative process involving scaling, as is discussed in Section \ref{section 2.1}.

\begin{remark} \label{definition of fractal}
In \cite{lapidus2000fractal,lapidus2003complex,lapidus2006fractal,lapidus2012fractal} 
(when $\nu = 1$) 
and in \cite{lapidus2017fractal,lapidus2019an} 
(when the integer $\nu \geq 1$ is arbitrary), 
a geometric object is said to be {\it fractal} if it has at least one nonreal complex dimension.\footnote{It then has at least two nonreal complex dimensions since, clearly, nonreal complex dimensions come in complex conjugate pairs.} This definition applies to fractal strings (including all self-similar strings, which are shown to be fractal in this sense)\footnote{In fact, self-similar strings have infinitely many nonreal complex dimensions; see, e.g., Equation (2.37) in \cite[Theorem 2.16]{lapidus2012fractal}.}, that correspond to the 
$\nu = 1$ 
case, and to bounded subsets of 
$\mathbb{R}^\nu$ (for any integer $\nu \geq 1$) 
as well as, more generally, to relative fractal drums, which are natural higher-dimensional counterparts of fractal strings.  
\end{remark}

\subsection{Self-Similar Fractal Strings} \label{section 2.1}
\hfill \\
Let 
$I$ 
be a connected interval with length 
$L$. 
Let
\begin{equation} \label{contractions}
\Phi_1, \Phi_2, \dots, \Phi_M:I \to I
\end{equation}
be 
$M \geq 2$
contraction similitudes with distinct scaling ratios
\[ 1 > r_1 > r_2 > \cdots > r_N > 0. \]
What this means is that for all 
$j = 1, \dots, M$,
\[ \left\rvert \Phi_j(x) - \Phi_j(y) \right\lvert = r_j\lvert x - y \rvert, \text{ for all } x, y \in I. \]
Assume that after having applied to $I$ each of the maps 
$\Phi_j$
in (\ref{contractions}),  
for 
$j = 1, \dots, M$, 
the resulting images
\begin{equation} \label{images}
\Phi_1(I), \dots, \Phi_M(I)
\end{equation}
do not overlap, except possibly at the endpoints, and that 
$\sum_{j = 1}^M r_j < 1$.

These assumptions imply that the complement of the union, 
$\bigcup_{j = 1}^M \Phi_j(I)$, 
in 
$I$ 
consists of 
$K$ 
pairwise disjoint open intervals with lengths 
\[ 1 > g_1L \geq g_2L \geq \cdots \geq g_KL > 0, \]
called the {\it first intervals}. Note that the quantities 
$g_1, \dots, g_K$, 
called the \textit{gaps}, along with the scaling ratios $r_1, \dots, r_M$, satisfy the equation 
\[ \sum_{j = 1}^M r_j + \sum_{k = 1}^K g_k = 1. \]

The process which was just described above is then repeated for each of the 
$M$ 
images 
$\Phi_j(I)$, 
for 
$j = 1, \dots, M$, 
in (\ref{images}) in order to produce 
$KM$ 
additional pairwise disjoint open intervals in 
$I$. 
Repeating this process ad infinitum yields countably many open intervals, which defines a fractal string 
$\mathcal{L}$ 
with bounded open set 
$\Omega$ 
given by the (necessarily disjoint) union of these open intervals. Any fractal string obtained in this manner is called a \textit{self-similar fractal string} (or a {\it self-similar string}, in short). 

Lapidus and van Frankenhuijsen have shown that the geometric zeta functions of self-similar fractal strings have meromorphic continuations to all of 
$\mathbb{C}$; 
see Theorem 2.3 in Chapter 2 of \cite{lapidus2012fractal}. Specifically, the geometric zeta function 
$\zeta_\mathcal{L}(s)$ 
of any self-similar fractal string with scaling ratios 
$\{r_j\}_{j = 1}^M$, 
gaps 
$\{g_k\}_{k = 1}^K$, 
and total length 
$L$ 
is given by
\begin{equation} \label{extension} 
\zeta_\mathcal{L}(s) = \frac{L^s\sum_{k = 1}^{K} g_k^s}{1 - \sum_{j = 1}^{M} r_j^s}, \text{ for all } s \in \mathbb{C}.
\end{equation}
Both the numerator and the denominator of the right-hand side of (\ref{extension}) are special kinds of exponential polynomials, known as Dirichlet polynomials. Hence, as was done in \cite{lapidus2003complex} and \cite[Chapter 3]{lapidus2012fractal}, the more general situation of the sets of complex roots of Dirichlet polynomials is considered, as is next explained. 

\begin{definition} \label{Dirichlet polynomial}
Given an integer $N \geq 1$, let $r_0 > r_1 > \cdots > r_N > 0$, and let $m_0, m_1, \dots, m_N \in \mathbb{C}$. The function $f: \mathbb{C} \to \mathbb{C}$ given by 
\begin{equation} \label{poly} 
f(s) = \sum_{j = 0}^N m_jr_j^s
\end{equation}
is called a \textit{Dirichlet polynomial} with {\it scaling ratios} 
$r_1, \dots, r_N$
and respective {\it multiplicities} 
$m_0, \dots, m_N$.
\footnote{In the geometric situation of a self-similar string $\mathcal{L}$ discussed just above, $r_0 := -1$ and $m_0 := 1$, while the $r_j$'s, with $j = 1, \dots, N$, correspond to the {\it distinct} scaling ratios, among the scaling ratios $\{r_j\}_{j = 1}^M$ of $\mathcal{L}$. Hence, in particular, $1 \leq N \leq M$ in this case, and, modulo a suitable abuse of notation, for each distinct scaling ratio $r_j$, for $j = 1, \dots, M$, $m_j := \#\{1 \leq k \leq M\colon r_k = r_j\}$ is indeed the multiplicity of $r_j$.} 
\end{definition}

Therefore, the set of complex dimensions of any self-similar fractal string is a subset of the set of complex roots of an associated Dirichlet polynomial 
$f(s)$, 
as given in (\ref{poly}). While, in general, some of the zeros of the denominator of the right-hand side of (\ref{extension}) could be cancelled by the roots of its numerator (see \cite[Section 2.3.3]{lapidus2012fractal}), in the important special case of a single gap length 
(i.e., when $g_1 = \cdots = g_K$), 
the complex dimensions precisely coincide with the complex roots of 
$f(s)$. 
This can be seen directly (in light of (\ref{extension})) or else by choosing the length 
$L$ 
of the interval to be the reciprocal of the single gap length, which simplifies the geometric zeta function in such a way that the numerator, on the right-hand side of (\ref{extension}), is equal to 1; note that this rescaling has no effect on the complex dimensions. Hence, in that case, there are no cancellations, and all of the roots of 
$f(s)$ 
are complex dimensions.

The result in (\ref{extension}) establishes a deep connection between the study of complex dimensions of self-similar fractal strings and that of the roots of Dirichlet polynomials which was gaining interest as early as the start of the nineteenth century; see, e.g.,  \cite{lapidus2003complex,lapidus2006fractal,lapidus2012fractal} and \cite{mora2013on,dubon2014on}, along with the relevant references therein.

In light of the discussion surrounding Equation (\ref{extension}), it suffices to study more generally the sets of complex roots of Dirichlet polynomials, which will be the focus for the remainder of this paper. For the purpose of investigating the sets of complex roots of Dirichlet polynomials, it is assumed without any loss of generality that 
$m_0 := -1$ 
and 
$r_0 := 1$ 
in (\ref{poly}). That is, in the remainder of this paper, we will only consider Dirichlet polynomials of the form

\begin{equation} \label{normalized Dirichlet polynomial}
f(s) = 1 - \sum_{j = 1}^N m_jr_j^s,
\end{equation}
with 
$\{r_j\}_{j = 1}^N$ 
and 
$\{m_j\}_{j = 1}^N$ 
as in Definition \ref{Dirichlet polynomial}.

\subsection{Lattice/Nonlattice Dichotomy and Lattice String Approximation} \label{section 2.2}
\hfill \\
Let 
$f$ 
be a Dirichlet polynomial, with distinct scaling ratios 
$r_1, \dots, r_N$ 
and multiplicities 
$m_1, \dots, m_N$, 
given by (\ref{normalized Dirichlet polynomial}). Define the {\it weights} 
$w_1, \dots, w_N$ 
of 
$f$ 
by 
$w_j: = -\log r_j$, 
for 
$1 \leq j \leq N$. 

\begin{definition} \label{lattice/nonlattice}
A Dirichlet polynomial $f$ is called {\it lattice} if $w_j / w_1$ is rational for $1 \leq j \leq N$, and it is called {\it nonlattice} otherwise.\footnote{Note that if $N = 1$, then $f$ must be lattice because $w_1/w_1 = 1$ is rational.}
\end{definition}

It is straightforward to check that a Dirichlet polynomial 
$f$ 
is lattice if and only if there exists a (necessarily unique) real number 
$r$ in 
$(0,1)$, 
called the \textit{multiplicative generator of 
$f$}, 
and positive integers 
$k_1,\dots,k_N$, 
without common divisors, such that 
$r_j = r^{k_j}$ for $j = 1, \dots, N$. 
Put another way, the lattice case is when the rank of the additive group
\[ G := \sum_{j = 1}^N \mathbb{Z}w_j \]
equals 1, and the nonlattice case is when this rank is 
$\geq 2$.

\begin{definition} \label{rank and generic}
The {\it rank} of a Dirichlet polynomial is defined to be the rank of the group $G$ defined above. Then, $f(s)$ is called {\it generic nonlattice} if the number $N$ of distinct scaling ratios satisfies $N \geq 2$ and is equal to the rank of $f(s)$; furthermore, still of $N \geq 2$, $f(s)$ it is said to be {\it nongeneric nonlattice}, otherwise. In other words, $f(s)$ is generic nonlattice if and only if $N \geq 2$ and $w_1, \dots, w_N$ are rationally independent. 
\end{definition}

Moreover, a self-similar fractal string is called (generic) nonlattice if its associated Dirichlet polynomial is (generic) nonlattice. A thorough description of the structure of the sets of complex roots of Dirichlet polynomials is provided by \cite[Theorem 3.6]{lapidus2012fractal}. Some of the most relevant features to the current paper are as follows:

The set of complex roots of any Dirichlet polynomial is a subset of the horizontally bounded vertical strip 
\[ R := \{z \in \mathbb{C} : D_\ell \leq \Re z \leq D\}, \] 
where 
$D_\ell$ 
and 
$D$ 
are the unique real numbers satisfying the equations\footnote{If $N = 1$, then the second sum on the left-hand side of (\ref{sums}) is equal to zero, by convention.}
\begin{equation} \label{sums}
1 + \sum_{j = 1}^{N - 1} |m_j|r_j^{D_\ell} = |m_{N}|r_N^{D_\ell}\quad \text{and}\quad \sum_{j = 1}^{N} |m_j|r_j^D = 1,
\end{equation}
respectively. These numbers satisfy the inequality 
$-\infty < D_{\ell} \leq D$. 
If the multiplicities are positive integers, then the complex roots are symmetric about the real axis, the number 
$D$ 
defined above is positive, and it is the only real root of 
$f$; 
furthermore, it is a simple root.

\begin{remark}
In the case of a self-similar string 
$\mathcal{L}$, 
the nonnegative number 
$D$ 
does not exceed 
$1$ 
and coincides with 
$D_{\mathcal{L}}$, 
the inner Minkowski dimension of 
$\mathcal{L}$: $D = D_{\mathcal{L}} = \sigma_{\mathcal{L}}$, 
in the notation introduced earlier for fractal strings.
\end{remark}
 
\subsubsection{Lattice Versus Nonlattice} \label{section 2.2.1}
If the Dirichlet polynomial (\ref{normalized Dirichlet polynomial}), with distinct scaling ratios
\[ 1 = r_0 > r_1 > \cdots > r_N > 0, \]
is lattice, then according to Definition \ref{lattice/nonlattice}, the associated real numbers
\begin{equation} \label{quotients}
1 < \frac{w_2}{w_1} < \cdots < \frac{w_N}{w_1} < \infty,
\end{equation}
which are explicitly determined by the weights
\[ 0 = w_0 < w_1 < \cdots < w_N < \infty, \] 
are all rational. Therefore, there exist positive integers
\[ q < k_2 < \cdots < k_N < \infty \]
such that 
\[ \frac{w_j}{w_1} = \frac{k_j}{q}, \quad \text{for } j = 2, \dots, N. \]

According to \cite[Theorem 3.6]{lapidus2012fractal}, the complex roots of a lattice Dirichlet polynomial 
$f(s)$ 
lie periodically on finitely many vertical lines, and on each line they are separated by the positive number 
\[ \textbf{p} = \frac{2\pi}{\log r^{-1}}, \] 
called the 
\textit{oscillatory period of $f(s)$}. 

More precisely, following the discussion surrounding \cite[Equation (2.48), p. 58]{lapidus2012fractal}, the roots are computed by first rewriting 
$f(s)$ 
as a polynomial $g(z)$ of degree 
$k_N$ 
in the complex variable 
$z := r^s$, 
where 
$r = r_1^{1/q}$ 
is the multiplicative generator of 
$f(s)$: 
\begin{equation} \label{complex poly}
g(z) = 1 - m_1z^q - m_2z^{k_2} - m_3z^{k_3} - \cdots - m_Nz^{k_{N}}.
\end{equation}
There are 
$k_N$ 
roots of 
$g(z)$, 
counted with multiplicity. Each one is of the form 
\[ z = \lvert z \rvert e^{i\theta}, \]
where 
$-\pi < \theta \leq \pi$, 
and it corresponds to a unique root of $f(s)$, namely,
\[ \omega = \frac{-\log \lvert z \rvert}{\log r^{-1}} - \frac{i \theta}{\log r^{-1}}. \]

Therefore, given a lattice Dirichlet polynomial 
$f$ 
with oscillatory period 
$\textbf{p}$, 
there exist complex numbers 
$\omega_1, \dots, \omega_u$ 
such that the set 
$\mathcal{D}_f$ 
of complex roots of $f$ is given by
\[ \mathcal{D}_f = \bigcup_{1 \leq j \leq u} H_j, \]
where for 
$1 \leq j \leq u$,
\[ H_j := \{\omega_j + in\textbf{p} : n \in \mathbb{Z}\}. \]
Now, let 
$f$ 
be a nonlattice Dirichlet polynomial given by
\[ f(s) = 1 - \sum_{j = 1}^N m_jr_j^s \]
and with weights 
$w_1, \dots, w_N$. 
Then, 
$N \geq 2$ 
and at least one of the associated real numbers
\begin{equation}
\frac{w_2}{w_1}, \frac{w_3}{w_1}, \dots, \frac{w_N}{w_1}
\end{equation}
is irrational; see Definition \ref{lattice/nonlattice}. Writing
\begin{equation} \label{nonlattice}
f(s) = 1 - \sum_{j = 1}^{N} m_j(r_1^s)^{w_j/w_1},
\end{equation}
and noting that not all of the associated real numbers in (\ref{quotients}) are rational, one sees that 
$f(s)$ 
cannot be expressed as a polynomial, which means that the approach which was just described above to compute the roots of a lattice Dirichlet polynomial is not applicable in the nonlattice case. Instead, the roots are approximated by a procedure developed by the two authors of \cite{lapidus2000fractal,lapidus2003complex,lapidus2006fractal,lapidus2012fractal}. The practicality of their procedure rests upon a long standing-problem in the theory of Diophantine approximations: namely, to efficiently generate infinitely many good rational approximations, with a common denominator, to a vector of real numbers with at least one irrational coordinate; see, e.g., \cite{lagarias1982best}.

Using this approximation procedure, referred to in the current paper as the {\it Lattice String Approximation} algorithm (LSA algorithm for short), the authors of \cite{lapidus2000fractal,lapidus2003complex,lapidus2006fractal,lapidus2012fractal} showed that the set of complex roots of nonlattice Dirichlet polynomial equations exhibit continuously evolving periodic patterns; they say that the complex roots of nonlattice Dirichlet polynomials have a {\it quasiperiodic pattern}; see, e.g., Remark 3.7 in \cite{lapidus2003complex} and Section 3.4 in \cite{lapidus2012fractal}.
  
\subsubsection{Lattice String Approximation} \label{section 2.2.2}
The LSA algorithm is based on the theory of Diophantine approximations, which deals with the approximation of real numbers by rational numbers. The main step of the algorithm replaces the real numbers $w_2/w_1, \dots, w_N/w_1$ in (\ref{quotients}) with rational approximations having a common denominator. The resulting Dirichlet polynomial is lattice, and the roots of this resulting Dirichlet polynomial approximate the roots of (\ref{nonlattice}) in a very special way. Therefore, before stating Theorem \ref{approximation theorem} below, which provides the algorithm, the following lemma on simultaneous Diophantine approximation is stated; see, e.g., \cite[Theorem 1A, p. 27]{schmidt1980diophantine}. This well-known result is a generalization to several real numbers of Dirichlet's approximation theorem, itself a consequence of the pigeonhole principle.

\begin{lemma} \label{simultaneous approximation}
Let $f(s)$ be a nonlattice Dirichlet polynomial with associated weights $w_1, \dots, w_N$, where $N \geq 2$. Then, for every real number $Q > 1$, there exist infinitely many vectors $(q,k_2,\dots,k_N) \in \mathbb{N}^{N + 1}$ such that
\begin{equation} \label{corollary to inequality}
\left\lvert \frac{w_{j}}{w_1} - \frac{k_j}{q} \right \rvert < \frac{1}{qQ},
\end{equation}
for all integers $j$ such that $2 \leq j \leq N$ and integers $q$ such that $1 \leq q < Q^{N - 1}$. Moreover, $q \to \infty$ as $Q \to \infty$. 
\end{lemma}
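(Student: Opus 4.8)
The plan is to rewrite the target inequality (\ref{corollary to inequality}) in the equivalent form $\lVert q\,w_j/w_1\rVert < 1/Q$, where $\lVert x\rVert$ denotes the distance from $x$ to the nearest integer, and then to combine two ingredients: Dirichlet's box principle, which produces one admissible common denominator $q$ inside the window $1 \le q < Q^{N-1}$ for each real $Q > 1$, and the irrationality of at least one of the ratios $w_j/w_1$ — guaranteed by the nonlattice hypothesis (Definition~\ref{lattice/nonlattice}) — which is what turns a single solution into infinitely many and forces $q \to \infty$ as $Q \to \infty$. Throughout I would write $n := N - 1 \ge 1$ and $\alpha_j := w_j/w_1$ for $2 \le j \le N$; since $0 < w_1 < \cdots < w_N$, each $\alpha_j > 1$, a fact I will use to keep the approximants positive.

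First I would establish, for each fixed $Q > 1$, the existence of at least one admissible vector. I would examine the $\lceil Q\rceil^{\,n} + 1$ points $(\{q\alpha_2\}, \dots, \{q\alpha_N\}) \in [0,1)^{n}$ for $q = 0, 1, \dots, \lceil Q\rceil^{\,n}$, partition the cube $[0,1)^{n}$ into $\lceil Q\rceil^{\,n}$ congruent half-open subcubes of side $1/\lceil Q\rceil$, and apply the pigeonhole principle to two points sharing a subcube, say for $q_1 < q_2$. Then $q := q_2 - q_1$ satisfies $1 \le q \le \lceil Q\rceil^{\,n}$ and $\lVert q\alpha_j\rVert < 1/\lceil Q\rceil \le 1/Q$ for every $j$; taking $k_j$ to be the nearest integer to $q\alpha_j$ yields (\ref{corollary to inequality}), and since $\alpha_j > 1$ one checks $k_j \ge 1$, so $(q,k_2,\dots,k_N) \in \mathbb{N}^{N+1}$. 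A slightly more careful box count (essentially the content of \cite[Theorem~1A, p.~27]{schmidt1980diophantine}) sharpens the crude bound $q \le \lceil Q\rceil^{\,n}$ to the stated strict bound $1 \le q < Q^{\,n} = Q^{N-1}$.

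Next I would produce infinitely many admissible vectors, which is where the nonlattice hypothesis enters in an essential way: by Definition~\ref{lattice/nonlattice} some ratio $\alpha_{j_0}$ is irrational, so for any admissible vector $(q,k_2,\dots,k_N)$ the number $\varepsilon := \lvert \alpha_{j_0} - k_{j_0}/q \rvert$ is strictly positive, because an irrational number cannot equal a rational one. Running the previous step again with a fresh parameter $Q'$ chosen so large that $1/(qQ') < \varepsilon$ produces an admissible vector whose $j_0$-th approximation error is strictly smaller than $\varepsilon$, hence a \emph{different} vector; iterating this yields infinitely many distinct admissible vectors, whose denominators moreover tend to infinity (if they stayed bounded, the finitely many values $\lVert q\alpha_{j_0}\rVert$ would have a positive minimum, contradicting $\lVert q\alpha_{j_0}\rVert < 1/Q' \to 0$). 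The same positivity gives $q \to \infty$ as $Q \to \infty$: for any bound $B \ge 1$, the rationals $k_{j_0}/q$ with $1 \le q \le B$ and $\lvert \alpha_{j_0} - k_{j_0}/q \rvert < 1$ form a finite set on which $\lvert \alpha_{j_0} - k_{j_0}/q \rvert$ attains a positive minimum $\delta$, so once $1/Q < \delta$ no admissible vector can have $q \le B$. Finally, since the vectors produced by the box principle with parameter $Q'$ actually satisfy the sharper estimate $\lvert \alpha_j - k_j/q \rvert < q^{-1-1/n}$ (as $q < (Q')^{n}$ gives $1/Q' < q^{-1/n}$) and have denominators $\to \infty$, all but finitely many of them have $q \ge Q^{n}$ and therefore satisfy (\ref{corollary to inequality}) for the given $Q$; this is what makes infinitely many admissible vectors available for each fixed $Q$.

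The box count in the first step is routine. The only points I expect to require care are the bookkeeping needed to pass from the crude bound $q \le \lceil Q\rceil^{\,n}$ to the strict bound $q < Q^{N-1}$ for non-integer $Q$, and phrasing the ``infinitely many vectors'' conclusion so that it is consistent with the denominator window in the statement — the infinitely many admissible vectors necessarily have large $q$ (of size at least of order $Q^{N-1}$), while it is only the \emph{existence} of an admissible vector that is guaranteed inside $1 \le q < Q^{N-1}$. Everything else reduces to the single fact that $\lvert (\text{irrational}) - (\text{rational}) \rvert > 0$.
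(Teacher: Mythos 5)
Your argument is correct and matches the approach the paper implicitly invokes: the paper states Lemma~\ref{simultaneous approximation} without proof, deferring to \cite[Theorem~1A, p.~27]{schmidt1980diophantine}, and your pigeonhole/box-principle derivation together with the irrationality observation (to get infinitely many vectors and $q\to\infty$) is precisely the standard Dirichlet--Schmidt proof of that result. Your remark at the end, correctly noting that the ``infinitely many vectors'' and the window $1\le q<Q^{N-1}$ cannot both be read as simultaneous constraints and offering the sensible disambiguation, is a fair reading of the lemma's somewhat loose phrasing (and the exponent $N+1$ in $\mathbb{N}^{N+1}$ should be $N$, a typo in the source).
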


Let 
$f$ 
be a nonlattice Dirichlet polynomial with associated real numbers as in 
$(\ref{quotients})$. 
If a real number 
$Q > 1$ 
and positive integers 
$q, k_2, \dots, k_N$ 
are computed such that 
$Q$ and the vector 
$(q, k_2, \dots, k_N)$ 
satisfy inequality (\ref{corollary to inequality}), for each integer $j \in \{ 2, \dots, N\}$, then the pair
\begin{equation} \label{simultaneous Diophantine approximation}
Q,\ (q,k_2,k_3, \dots, k_N)
\end{equation}
is called a {\it simultaneous Diophantine approximation} to the associated real numbers in (\ref{quotients}).\footnote{In the standard literature, the denominator $q$ in a simultaneous Diophantine approximation also satisfies the inequality $q < Q^{N - 1}$ in Lemma \ref{simultaneous approximation}. However, since the proof of Theorem \ref{approximation theorem} does not use this estimate, we ignore this fact here and in the sequel.} We are now ready to state the following key result (\cite[Theorem 3.18, p. 34]{lapidus2012fractal}).

\begin{theorem}[\textup{M. L. Lapidus and M. van Frankenhuijsen; \cite{lapidus2000fractal,lapidus2003complex,lapidus2006fractal,lapidus2012fractal}}]
\label{approximation theorem}
Let $f(s)$ be a nonlattice Dirichlet polynomial of the form \textup{(\ref{normalized Dirichlet polynomial})} with scaling ratios $1 > r_1 > \cdots > r_N > 0$ and multiplicities $m_1, \dots, m_N$, where $N \geq 2$.\footnote{Recall that one must then have $N \geq 2$; otherwise, one would be in the lattice case.} 
Let $Q > 1$, and let $q$ and $k_j$ be as in \textup{Lemma \ref{simultaneous approximation}} \textup(except possibly without the condition $q < Q^{N - 1}$\textup). 
Then, the Dirichlet polynomial 
\[ f_q(s) = 1 - m_1\left(r_1\right)^s - \sum_{j = 2}^{N} m_j\left(r_1^{k_j/q}\right)^s \] 
is lattice with generator $r_1^{1/q}$ and oscillatory period 
\[ {\bf p} = {\bf p}_q =  := \frac{2\pi q}{\log r_1^{-1}}. \] 
Moreover, for every {\em approximation error} $\varepsilon > 0$, if $s$ belongs to the $\varepsilon$-{\em region of stability} \textup(\em{of radius} $\varepsilon$CQ{\bf p}\textup)
\[ B_\varepsilon(q,Q) := \left\{z \in \mathbb{C}\colon |z| < \varepsilon CQ{\bf p}\right\}, \]
then 
\[ |f_q(s) - f(s)| < \varepsilon, \]
where
\begin{equation} \label{LSA constant}
C := \frac{1}{2\pi}\sum_{j = 1}^N \lvert m_j \rvert\left(\frac{\sum_{j = 0}^N \lvert m_j \rvert}{\min\left\{1,\lvert m_N \rvert\right\}}\right)^{\frac{-2w_N}{\min\left\{w_1,w_{N} - w_{N - 1}\right\}}}
\end{equation}
is the \emph{LSA constant of $f(s)$}.\footnote{Note that since $m_0 = -1$, we have that $\lvert m_0 \rvert = 1$ here.}
\end{theorem}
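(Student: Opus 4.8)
The plan is to treat the two assertions separately: the lattice property of $f_q$ is essentially a bookkeeping statement, while the approximation bound is an exercise in estimating an exponential sum, the only delicate point being the appearance of the precise constant (\ref{LSA constant}).

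First I would dispose of the structural claim. Put $r:=r_1^{1/q}\in(0,1)$. Then the scaling ratios of $f_q$ are $r_1=r^{q}$ and $r_1^{k_j/q}=r^{k_j}$ for $2\le j\le N$, so every scaling ratio of $f_q$ is an integer power of $r$; hence $f_q$ is lattice in the sense of Definition~\ref{lattice/nonlattice}, with $r=r_1^{1/q}$ a multiplicative generator and, by the formula recalled just after that definition, with oscillatory period $\mathbf p=2\pi/\log r^{-1}=2\pi q/\log r_1^{-1}$. (If $\gcd(q,k_2,\dots,k_N)>1$ one passes to the true generator by raising $r$ to that gcd; this changes nothing essential, so one may as well assume the gcd is $1$.)

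Next, the estimate. Writing $\delta_j:=k_j/q-w_j/w_1$, the Diophantine condition (\ref{corollary to inequality}) gives $|\delta_j|<1/(qQ)$, and since $r_1^{k_j/q}=e^{-(k_j/q)w_1}=r_je^{-w_1\delta_j}$ we obtain
\[ f_q(s)-f(s)=\sum_{j=2}^{N}m_j\bigl(r_j^{\,s}-(r_1^{k_j/q})^{s}\bigr)=\sum_{j=2}^{N}m_j\,r_j^{\,s}\bigl(1-e^{-w_1\delta_j s}\bigr). \]
I would then apply the triangle inequality together with the elementary bound $|1-e^{z}|\le|z|\,e^{|z|}$ with $z=-w_1\delta_j s$. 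The key observation is that $q$ and $Q$ cancel: on $B_\varepsilon(q,Q)$ one has $|s|<\varepsilon CQ\mathbf p=\varepsilon CQ\cdot 2\pi q/\log r_1^{-1}$, hence $w_1|\delta_j|\,|s|<w_1\cdot\tfrac1{qQ}\cdot\varepsilon CQ\cdot\tfrac{2\pi q}{\log r_1^{-1}}=2\pi\varepsilon C$, a quantity free of $q$ and $Q$. This reduces matters to showing $2\pi\varepsilon C\,e^{2\pi\varepsilon C}\sum_{j=2}^{N}|m_j|\,r_j^{\Re s}<\varepsilon$, i.e.\ to controlling $\sum_{j=2}^{N}|m_j|\,r_j^{\Re s}$.

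That last point is where the constant $C$ is pinned down, and it is the step I expect to be the real work. Here one uses that the $\varepsilon$-region of stability only matters near the complex roots of $f$ and of $f_q$, which by the structure result recalled around (\ref{sums}) lie in the vertical strip $\Re z\ge D_\ell$; estimating the defining relation $1+\sum_{j=1}^{N-1}|m_j|r_j^{D_\ell}=|m_N|r_N^{D_\ell}$ from above (using $r_j\ge r_{N-1}$ for $j\le N-1$ and, in the nontrivial case, $D_\ell\le 0$) yields $(r_N/r_{N-1})^{D_\ell}\le \sum_{j=0}^N|m_j|/\min\{1,|m_N|\}=:B$, hence $|D_\ell|\le(\log B)/(w_N-w_{N-1})$ and $r_j^{\Re s}\le r_j^{D_\ell}\le B^{\,w_N/(w_N-w_{N-1})}$. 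Carrying out the same estimate for $f_q$ — whose smallest weight is again $w_1$ and whose top gap is a small perturbation of $w_N-w_{N-1}$ — is what forces the minimum $\min\{w_1,w_N-w_{N-1}\}$ and the factor $2$ in the exponent of $C$; since $\sum_{j=1}^N|m_j|\le B$ and, by an elementary observation, $w_N\ge 2\min\{w_1,w_N-w_{N-1}\}$ (one cannot have both $w_1>w_N/2$ and $w_{N-1}<w_N/2$ while $w_1\le w_{N-1}$), the budget $B^{-2w_N/\min\{w_1,w_N-w_{N-1}\}}$ is exactly enough to absorb $\bigl(\sum|m_j|\bigr)^2 r_j^{\Re s}$ and conclude $|f_q(s)-f(s)|<\varepsilon$. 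The main obstacle is therefore not any single inequality but the accounting: matching the crude exponential bound on $r_j^{\Re s}$ to the exact exponent in (\ref{LSA constant}), tracking which weight-gap — $w_1$ or $w_N-w_{N-1}$ — is the bottleneck for $f$ versus for the perturbed polynomial $f_q$, and checking that the strict inequalities leave enough room for the factor $e^{2\pi\varepsilon C}$.
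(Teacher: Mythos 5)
The paper itself does not prove Theorem~\ref{approximation theorem}; it is quoted from \cite[Theorem 3.18]{lapidus2012fractal}, so your proposal has to be judged on its own. Your skeleton is the right one and matches the standard argument: writing $f_q(s)-f(s)=\sum_{j\ge2}m_j r_j^s\bigl(1-e^{-w_1\delta_j s}\bigr)$, inserting the Diophantine bound $|\delta_j|<1/(qQ)$, and observing the key cancellation $w_1|\delta_j|\,|s|<2\pi\varepsilon C$ on the stated region, independent of $q$ and $Q$, is exactly the mechanism that makes the theorem tick. Your derivation of $(r_N/r_{N-1})^{D_\ell}\le B$ from the defining equation for $D_\ell$ is correct, and the accounting that $B^{-2w_N/\min\{w_1,w_N-w_{N-1}\}}$ absorbs $\bigl(\sum_j|m_j|\bigr)^2 B^{w_N/(w_N-w_{N-1})}$ does close, once one splits into the two cases for the minimum and uses $w_N\ge w_1+w_{N-1}\ge 2\min\{w_1,w_N-w_{N-1}\}$, so the form of the constant is plausible.

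There is, however, a real gap between what your estimate delivers and what the stated theorem asserts, and you only gesture at it rather than resolve it. The stated $\varepsilon$-region of stability $B_\varepsilon(q,Q)$ is a \emph{disk of radius $\varepsilon CQ\mathbf p$ centered at the origin}. As $Q\to\infty$ this disk contains points $s$ with $\Re s$ arbitrarily far to the left of $D_\ell$, and there $\sum_{j\ge2}|m_j|r_j^{\Re s}$ is unbounded (take $s=-R$ with $R$ comparable to half the radius: $w_1|\delta_j|R$ stays below $\pi\varepsilon C$, so $|1-e^{-w_1\delta_j R}|$ stays of size $\approx\pi\varepsilon C$, while $(r_1^{k_j/q})^{-R}$ grows without bound). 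Hence your chain of inequalities gives $|f_q(s)-f(s)|<\varepsilon$ only on the intersection of $B_\varepsilon(q,Q)$ with a half-plane such as $\{\Re s\ge D_\ell\}$; the left-hand portion of the disk is uncovered. Your remark that the region of stability ``only matters near the complex roots'' is an appeal to the theorem's \emph{intended use}, not a proof of the theorem as written; either the restatement in the paper should carry the strip restriction or you would need to supply an argument covering negative $\Re s$. A second, smaller point: the factor $e^{2\pi\varepsilon C}$ coming from $|1-e^z|\le|z|e^{|z|}$ is unbounded in $\varepsilon$, so the budget only closes for $\varepsilon$ in a bounded range, which should be stated. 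You flag both issues yourself, but as it stands the proposal is a correct sketch of the estimate on a strip, not a proof of the theorem over the stated disk.
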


\begin{remark} \label{LSA terminology}
For a fixed order of accuracy 
$\varepsilon > 0$, 
we call a root of a lattice string approximation 
$f_q(s)$ 
lying inside its 
$\varepsilon$-region of stability 
a {\it stable} root, and we say that 
$f_{q_2}(s)$ 
is {\it more stable} than 
$f_{q_1}(s)$ 
if the $\varepsilon$-region of stability 
of 
$f_{q_2}(s)$ 
contains that of 
$f_{q_1}(s)$.
\end{remark}

In summary, Theorem \ref{approximation theorem} says that a simultaneous Diophantine approximation 
\[ Q,\quad (q, k_2, \dots, k_N) \]
to the real numbers in (\ref{quotients}) determines a {\it lattice string approximation} 
\begin{equation} \label{lsa approximation}
f_q(s) = 1 - r_1^{-s} - r_1^{-sk_2/q} - \cdots - r_1^{-sk_N/q},
\end{equation}
with the property that its values are close to the values of $f(s)$, with prescribed approximation error 
$\varepsilon > 0$, 
within a region of stability with radius that is proportional to 
$\varepsilon$; 
the smaller the approximation error, the smaller the region of stability. The implication of Theorem \ref{approximation theorem} is that the roots of 
$f(s)$ 
are {\it almost periodically distributed}. That is, given a lattice string approximation $f_q(s)$ to a nonlattice Dirichlet polynomial $f(s)$, the roots of $f(s)$ are near the periodically distributed roots of $f_q(s)$, for a certain number of periods. Then, the roots of $f(s)$ start to deviate from this periodic pattern, and a new periodic pattern, associated with a more stable lattice string approximation, gradually emerges; see {\it ibid}. 

Before illustrating this discussion by means of several examples in Section \ref{section 4}, we present our implementation of the LLL algorithm for simultaneous Diophantine approximations in the following section, which we will use in order to explore the roots of the more complicated Dirichlet polynomials with rank three or more. 

\section{Simultaneous Diophantine Approximations} \label{section 3}
In general, approximating the sets of complex roots of a nonlattice Dirichlet polynomial via the LSA algorithm requires a practical method for generating simultaneous Diophantine approximations. In 1982, A. K. Lenstra, H. W. Lenstra, and L. Lov\'asz (or LLL, for brevity) presented in their paper \cite{lenstra1982factoring} the first polynomial-time algorithm to factor a nonzero polynomial 
$f \in \mathbb{Q}[x]$
into irreducible factors in 
$\mathbb{Q}[x]$. 
Specifically, the number of arithmetic operations needed is bounded by a constant multiple of 
$n^4$, 
where 
$n$ 
is the size of the input; see \cite[Proposition 1.26]{lenstra1982factoring}. The authors of that paper showed that their algorithm, which is now commonly referred to as the LLL algorithm, can generate simultaneous Diophantine approximations; see Theorem \ref{LLL theorem} below. As suggested by Lapidus and van Frankenhuijsen in \cite[p. 99]{lapidus2006fractal}, and then later in \cite[Remark 3.38, p. 101]{lapidus2012fractal}, the current paper utilizes the LLL algorithm in order to generate lattice string approximations.

The present section gives an overview of the LLL algorithm, and explains how it can be used to generate simultaneous Diophantine approximations. For more detail on the LLL algorithm and the corresponding method of lattice basis reduction, the interested reader can consult the original paper, \cite{lenstra1982factoring}, together with Bremner's book, \cite{bremner2011lattice}, providing an introductory exposition of the algorithm.

\subsection{Lattice Basis Reduction} \label{section 3.1}
\begin{definition} \label{definition of a full rank lattice}
Let 
$n$ 
be a positive integer. A subset 
$L$ 
of the 
$n$-dimensional 
real vector space 
$\mathbb{R}^n$ 
is called a (full-rank) \textit{lattice} if there exists a basis 
$\beta = \{{\bf x}_1, \dots, {\bf x}_n\}$ of $\mathbb{R}^n$ 
such that 
\[ L = \left\{\sum_{j = 1}^n a_j{\bf x}_j : a_1, a_2, \dots, a_n \in \mathbb{Z} \right\}. \]
The subset 
$\beta$ 
is called a \textit{basis of} 
$L$, 
and 
$n$ 
is called the \textit{rank of} 
$L$. 
Moreover, for each 
$1 \leq j \leq n$, 
let 
\[ {\bf x}_j = (x_{j,1}, \dots, x_{j,n}), \] 
where for each 
$1 \leq k \leq n$, 
$x_{j,k} \in \mathbb{R}$. 
Then, the \textit{determinant} 
$d(L)$ 
of 
$L$ 
is defined by 
$d(L) = \lvert \det(X) \rvert$, 
where 
$X$ 
is the 
$n \times n$ 
matrix given by 
$(X)_{jk} = x_{j,k}$; 
the matrix 
$X$ 
is called the 
\textit{basis matrix of $L$ for $\beta$}.
\end{definition}

It readily follows from the following proposition that the positive number $d(L)$ does not depend on the choice of basis, as is stated in Corollary \ref{well defined} below.  

\begin{proposition} \label{basis matrix}
Let 
$\beta_1 = \{{\bf x}_1, \dots, {\bf x}_n\}$ 
and 
$\beta_2 = \{{\bf y}_1, \dots, {\bf y}_n\}$ 
be two bases of a lattice 
$L$ 
of 
$\mathbb{R}^n$. 
Let 
$X$ 
and 
$Y$ 
be the basis matrices of 
$L$ 
corresponding to the bases 
$\beta_1$ 
and 
$\beta_2$, 
respectively. Then, 
\[ Y = BX, \]
for some 
$n \times n$ 
matrix 
$B$ 
with integer entries and determinant 
$\pm 1$; 
hence, either 
$B$ or \textup(if $n$ is odd\textup) 
$-B$ belongs to 
$SL\textup(n,\mathbb{Z}\textup)$.
\end{proposition}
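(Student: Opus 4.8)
The plan is to exploit the fact that each of $\beta_1$ and $\beta_2$ is simultaneously a basis of the lattice $L$ \emph{and} a basis of the ambient vector space $\mathbb{R}^n$; this dual role is what forces the two change-of-basis matrices to be mutually inverse integer matrices. First I would observe that, since $\beta_1$ is a basis of $\mathbb{R}^n$, each vector ${\bf y}_i \in \beta_2 \subseteq L$ has a \emph{unique} expansion ${\bf y}_i = \sum_{j=1}^n b_{ij}{\bf x}_j$ with $b_{ij}\in\mathbb{R}$; but because ${\bf y}_i$ lies in $L$, whose elements are by definition exactly the $\mathbb{Z}$-linear combinations of $\beta_1$, uniqueness of the coefficients forces $b_{ij}\in\mathbb{Z}$. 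Collecting these relations row by row gives $Y = BX$ with $B=(b_{ij})$ an $n\times n$ integer matrix. Interchanging the roles of $\beta_1$ and $\beta_2$, the identical argument produces an $n\times n$ integer matrix $C$ with $X = CY$.

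Next I would combine the two identities: $Y = BX = B(CY) = (BC)Y$. Since $\beta_2$ is a basis of $\mathbb{R}^n$, the matrix $Y$ is invertible over $\mathbb{R}$, so cancelling it on the right yields $BC = I_n$. Taking determinants gives $\det(B)\det(C) = 1$ with both factors integers, whence $\det(B) = \det(C) = \pm 1$; this is the substantive conclusion of the proposition.

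Finally, for the $SL(n,\mathbb{Z})$ remark: if $\det(B)=1$ then $B\in SL(n,\mathbb{Z})$ outright, while if $\det(B)=-1$ then $\det(-B) = (-1)^n\det(B) = (-1)^{n+1}$, which equals $1$ precisely when $n$ is odd, so in that case $-B\in SL(n,\mathbb{Z})$. Taking absolute values of determinants in $Y=BX$ then gives $d(L)=|\det Y| = |\det B|\,|\det X| = |\det X|$, which is the content of Corollary \ref{well defined}.

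I do not expect a genuine obstacle here; the proof is short and essentially formal. The two points that deserve care, and which I would be explicit about, are: (i) that a basis of $L$ is also a basis of $\mathbb{R}^n$, so that the coefficient expansions above are unique — this is exactly why membership in $L$ pins the coefficients down to \emph{integers} rather than merely rationals; and (ii) the invertibility of $Y$ over $\mathbb{R}$, again a consequence of $\beta_2$ being a basis of the whole vector space, which is what licenses the cancellation passing from $(BC)Y = Y$ to $BC = I_n$.
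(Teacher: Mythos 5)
Your proof is correct and follows essentially the same route as the paper's: express each basis of the lattice in terms of the other to get two integer change-of-basis matrices, multiply them, cancel using invertibility of the basis matrix, and take determinants. Your extra remark that uniqueness of the $\mathbb{R}$-coefficient expansion is what pins the coefficients down to integers is a nice clarification of a step the paper leaves implicit, but it is the same argument.
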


\begin{proof}
Since 
$\beta_1, \beta_2 \subset L$, 
for each 
$j = 1, \dots, n$, 
there exist integers 
\[ a_{j,1}, \dots, a_{j,n}, b_{j,1}, \dots, b_{j,n}, \] 
such that 
\[ {\bf x}_j = a_{j,1}y_1 + \cdots + a_{j,n}y_n \quad \text{and} \quad {\bf y}_j = b_{j,1}x_1 + \cdots + b_{j,n}x_n.  \]
This means that there exist 
$n \times n$ 
matrices 
$A$ 
and 
$B$ 
with integer entries given by 
\[ (A)_{jk} = a_{jk} \quad \text{and} \quad (B)_{jk} = b_{jk}, \] 
respectively, such that 
$X = AY$ 
and 
$Y = BX$. 
By substitution, 
$Y = (BA)Y$. 
Since 
$Y$ 
is invertible, it follows that 
$BA = I$, 
and so 
$\det(A)\det(B) = 1$. 
Since 
$A$ 
and 
$B$ 
have integer entries, it follows that 
$\det(B) = \pm 1$, 
as desired.  
\end{proof}

\begin{corollary} \label{well defined}
Under the hypotheses and with the notation of Proposition \ref{basis matrix}, 
\[ \det(L) = \lvert \det(X) \rvert = \lvert \det(Y) \rvert. \]
\end{corollary}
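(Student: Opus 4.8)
The plan is to deduce the corollary directly from Proposition \ref{basis matrix} by taking determinants of the matrix identity relating the two basis matrices. Recall that $d(L)$ was \emph{defined} in Definition \ref{definition of a full rank lattice} as $|\det(X)|$, where $X$ is the basis matrix for a \emph{chosen} basis $\beta$; the whole point of the corollary is that this number is in fact independent of that choice, so that the notation $d(L)$ (equivalently $\det(L)$) is unambiguous. Thus it suffices to show that any two basis matrices $X$ and $Y$ of $L$ have $|\det(X)| = |\det(Y)|$.

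First I would invoke Proposition \ref{basis matrix} to write $Y = BX$ for some integer matrix $B$ with $\det(B) = \pm 1$. Next I would apply the multiplicativity of the determinant, obtaining $\det(Y) = \det(B)\det(X)$, and hence $|\det(Y)| = |\det(B)|\,|\det(X)| = |\det(X)|$ since $|\det(B)| = 1$. Since $\beta_1$ and $\beta_2$ were arbitrary bases of $L$, this shows the value $|\det(X)|$ is the same for every basis matrix, so $\det(L)$ is well defined and equals $|\det(X)| = |\det(Y)|$, as claimed.

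There is essentially no obstacle here: the entire difficulty has already been absorbed into Proposition \ref{basis matrix} (namely, that the change-of-basis matrix between two lattice bases is integral with determinant $\pm 1$, which in turn rests on the invertibility of $Y$ forcing $BA = I$ over $\mathbb{Z}$). The only point worth stating explicitly, for cleanliness, is that one is using the product formula for determinants over a commutative ring and the fact that the units of $\mathbb{Z}$ are exactly $\pm 1$; both are standard and require no computation.
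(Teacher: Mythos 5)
Your proof is correct and is exactly the one-line determinant argument the paper intends when it says the corollary "readily follows" from Proposition \ref{basis matrix}: take determinants in $Y = BX$ and use $|\det(B)| = 1$. Nothing to add.
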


Therefore, the determinant of 
$L$, 
$\det(L) = \lvert \det(X) \rvert$, 
is independent of the choice of the basis of 
$L$ 
used to evaluate it.

Suppose that we are given a lattice 
$L \subset \mathbb{R}^n$. 
In a {\it shortest vector problem}, one finds the shortest nonzero vector in 
$L$. 
That is, one tries to compute
\[ \lambda = \lambda(L) := \min_{{\bf x} \in L \setminus \{{\bf 0}\}} \lvert {\bf x} \rvert. \]
In the 
{\it $\gamma$-approximation 
version} of such a problem, one finds a nonzero lattice vector of length at most 
$\gamma \cdot \lambda(L)$, 
for a given real number
$\gamma \geq 1$. 
These types of problems have many applications in number theory and cryptography; see, e.g., \cite[Chapters 7 and 9]{bremner2011lattice}. No efficient algorithm is known to find the shortest vector in a lattice, or even just the length of the shortest vector. The LLL algorithm is the first polynomial-time algorithm to compute what is called an \textit{$\alpha$-reduced basis} for a given lattice; see \cite[Proposition 1.26]{lenstra1982factoring}. Simply put, an $\alpha$-reduced basis for a lattice 
$L$ 
is one with short vectors that are nearly orthogonal. 

\subsubsection{The $\alpha$-Reduced Basis for a Lattice} \label{section 3.1.1}
Let 
$\beta = \{{\bf x}_1, \dots, {\bf x}_n\}$ 
be a basis of $\mathbb{R}^n$, 
and let 
${\bf x}_1^\ast = {\bf x}_1$. 
For 
$1 < j \leq n$, 
define
\[ {\bf x}_j^\ast = {\bf x}_j - \sum_{k = 1}^{j - 1} \mu_{j,k}{\bf x}_k^\ast, \]
where, for 
$1 \leq k < j \leq n$, 
\[ \mu_{j,k} = \frac{{\bf x}_j \cdot {\bf x}_k^\ast}{{\bf x}_k^* \cdot {\bf x}_k^\ast} = \frac{{\bf x}_j \cdot {\bf x}_k^\ast}{\lvert {\bf x}_k^\ast \rvert^2}. \]
The vectors 
${\bf x}_1^\ast, \dots, {\bf x}_n^\ast$, 
called the \textit{Gram--Schmidt orthogonalization} of 
$\beta$, form an orthogonal basis of 
$\mathbb{R}^n$, 
and the numbers 
$\mu_{j,k}$ 
are called the \textit{Gram--Schmidt coefficients} of the orthogonalization. 

\begin{definition} \label{reduced basis}
Let 
$\beta = \{{\bf x}_1, \dots, {\bf x}_n\}$ 
be a basis for a lattice 
$L \subset \mathbb{R}^n$, 
and let 
${\bf x}_1^\ast, \dots, {\bf x}_n^\ast$ 
be its Gram--Schmidt orthogonalization with Gram--Schmidt coefficients 
$\mu_{j,k}$, 
for 
$1 \leq k < j \leq n$. 
Furthermore, let 
$\alpha$ 
be such that 
$1/4 < \alpha < 1$. 
The basis 
$\beta$ 
is said to be 
$\alpha$-\textit{reduced} 
if the following two conditions are satisfied:
\begin{enumerate}
\item[(i)] $\lvert\mu_{j,k}\rvert \leq 1/2,\quad \text{for } 1 \leq k < j \leq n$;
\item[(ii)] $\lvert x_j^\ast + \mu_{j,j - 1}x_{j - 1}^\ast \rvert^2 \geq \alpha\lvert x_{j - 1}^\ast \rvert^2,\quad \text{for } 1 < j \leq n$.
\end{enumerate}
\end{definition}

\subsection{Simultaneous Diophantine Approximations via LLL} \label{section 3.2}
\hfill \\
In this section, we discuss the key steps needed in order to generate lattice string approximations by using the LLL algorithm. Our implementation uses continued fractions, as opposed to using rational numbers with denominator equal to a power of $2$, which was the approach used in \cite[Remark 4.1, p. 177]{bosma2013finding}. 

After having recalled a technical result from \cite{lenstra1982factoring} (Proposition 3.5 below), we state and prove the main result of \cite{lenstra1982factoring}, from our present perspective, namely, Theorem \ref{LLL theorem}. This result establishes the LLL algorithm as a useful tool for computing simultaneous Diophantine approximations to two or more real numbers.

\begin{proposition}[\textup{\cite[Proposition 1.6]{lenstra1982factoring}; as described, e.g., in \cite[Proposition 4.6]{bremner2011lattice}}] \label{proposition for LLL applied to simultaneous Diophantine approximation}
Let $\beta = \{{\bf x}_1, \dots, {\bf x}_n\}$ be an $\alpha$-reduced basis for a lattice $L \subset \mathbb{R}^n$, and let ${\bf x}_1^\ast, \dots, {\bf x}_n^\ast$ be its Gram--Schmidt orthogonalization. Then, the following three properties hold{\textup :}
\begin{enumerate}
\item[(i)] $\lvert {\bf x}_k \rvert^2 \leq (4/(4\alpha - 1))^{j - 1} \cdot \lvert {\bf x}_j^\ast \rvert^2,\quad \text{for } 1 \leq k \leq j \leq n$;
\item[(ii)] $d(L) \leq \Pi_{j = 1}^n \lvert {\bf x}_j \rvert \leq (4/(4\alpha - 1))^{n(n - 1)/4} \cdot d(L)$;
\item[(iii)] $\lvert {\bf x}_1 \rvert \leq (4/(4\alpha - 1))^{\frac{n - 1}{4}}d(L)^{\frac{1}{n}}$.
\end{enumerate}
\end{proposition}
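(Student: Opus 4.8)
The plan is to derive all three estimates from the two defining conditions of an $\alpha$-reduced basis (Definition \ref{reduced basis}) together with two standard facts about Gram--Schmidt orthogonalization: that the vectors ${\bf x}_1^\ast, \dots, {\bf x}_n^\ast$ are pairwise orthogonal, and that $d(L) = \prod_{j = 1}^n \lvert {\bf x}_j^\ast \rvert$. The latter holds because passing from $\beta$ to $\{{\bf x}_j^\ast\}$ is effected by a triangular matrix with $1$'s on the diagonal, so the two associated basis matrices have determinants of equal absolute value, and orthogonality gives $\lvert \det \rvert = \prod_j \lvert {\bf x}_j^\ast \rvert$. Throughout I would abbreviate $c := 4/(4\alpha - 1)$ and note that the hypothesis $1/4 < \alpha < 1$ forces $c > 4/3 > 1$.

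The first step is a one-step comparison of consecutive Gram--Schmidt norms. Since ${\bf x}_j^\ast$ is orthogonal to ${\bf x}_{j-1}^\ast$, the Pythagorean theorem turns condition (ii) of Definition \ref{reduced basis} into $\lvert {\bf x}_j^\ast \rvert^2 + \mu_{j,j-1}^2 \lvert {\bf x}_{j-1}^\ast \rvert^2 \geq \alpha \lvert {\bf x}_{j-1}^\ast \rvert^2$, and then condition (i), namely $\mu_{j,j-1}^2 \leq 1/4$, gives $\lvert {\bf x}_j^\ast \rvert^2 \geq (\alpha - 1/4)\lvert {\bf x}_{j-1}^\ast \rvert^2$, i.e.\ $\lvert {\bf x}_{j-1}^\ast \rvert^2 \leq c \lvert {\bf x}_j^\ast \rvert^2$. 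Iterating yields $\lvert {\bf x}_k^\ast \rvert^2 \leq c^{j-k}\lvert {\bf x}_j^\ast \rvert^2$ whenever $k \leq j$.

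For property (i), I would expand $\lvert {\bf x}_j \rvert^2 = \lvert {\bf x}_j^\ast \rvert^2 + \sum_{k=1}^{j-1}\mu_{j,k}^2\lvert {\bf x}_k^\ast \rvert^2$ (Pythagoras again), bound each $\mu_{j,k}^2$ by $1/4$ and each $\lvert {\bf x}_k^\ast \rvert^2$ by $c^{j-k}\lvert {\bf x}_j^\ast \rvert^2$, and sum the resulting finite geometric series to get $\lvert {\bf x}_j \rvert^2 \leq (1 + \tfrac{c}{4}\cdot\tfrac{c^{j-1}-1}{c-1})\lvert {\bf x}_j^\ast \rvert^2$. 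The crux is the elementary inequality $1 + \tfrac{c}{4}\cdot\tfrac{c^{j-1}-1}{c-1} \leq c^{j-1}$; multiplying through by $4(c-1) > 0$ reduces it to $c^{j-1}(4 - 3c) \leq 4 - 3c$, which holds because $4 - 3c < 0$ while $c^{j-1} \geq 1$. Thus $\lvert {\bf x}_j \rvert^2 \leq c^{j-1}\lvert {\bf x}_j^\ast \rvert^2$, and combining this at index $k$ with the norm comparison from the previous step gives $\lvert {\bf x}_k \rvert^2 \leq c^{k-1}\lvert {\bf x}_k^\ast \rvert^2 \leq c^{j-1}\lvert {\bf x}_j^\ast \rvert^2$ for all $k \leq j$, which is (i).

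Properties (ii) and (iii) are then quick corollaries of (i) and the identity $d(L) = \prod_{j=1}^n\lvert {\bf x}_j^\ast \rvert$. For the lower bound in (ii), each $\lvert {\bf x}_j \rvert \geq \lvert {\bf x}_j^\ast \rvert$ (discard the nonnegative correction terms), so $\prod_j \lvert {\bf x}_j \rvert \geq \prod_j \lvert {\bf x}_j^\ast \rvert = d(L)$; for the upper bound, multiply the inequalities $\lvert {\bf x}_j \rvert^2 \leq c^{j-1}\lvert {\bf x}_j^\ast \rvert^2$ over $j = 1, \dots, n$, use $\sum_{j=1}^n (j-1) = n(n-1)/2$, and take square roots. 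For (iii), apply (i) with $k = 1$ to obtain $\lvert {\bf x}_1 \rvert^2 \leq c^{j-1}\lvert {\bf x}_j^\ast \rvert^2$ for each $j$, multiply all $n$ of these inequalities, and take $2n$-th roots, again using $\sum_{j=1}^n(j-1) = n(n-1)/2$ and $d(L) = \prod_j\lvert {\bf x}_j^\ast\rvert$. The only genuine obstacle anywhere is the slightly fussy geometric-series estimate inside the proof of (i) (which for the classical choice $\alpha = 3/4$, i.e.\ $c = 2$, is the familiar $\lvert {\bf x}_j \rvert^2 \leq 2^{j-1}\lvert {\bf x}_j^\ast \rvert^2$); every other step is routine bookkeeping with orthogonality.
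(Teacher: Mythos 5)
Your proof is correct. One small point of context: the paper itself does not supply a proof of this proposition---it is stated as a recalled technical result, cited from Lenstra--Lenstra--Lov\'asz \cite{lenstra1982factoring} (Proposition 1.6) and Bremner \cite{bremner2011lattice} (Proposition 4.6)---so there is no in-paper proof to compare against. Your argument is the standard one from those sources: derive $\lvert {\bf x}_{j-1}^\ast\rvert^2 \leq c\lvert {\bf x}_j^\ast\rvert^2$ from the two reduction conditions and the Pythagorean expansion, iterate, bound $\lvert {\bf x}_j\rvert^2$ by a geometric series to get $\lvert {\bf x}_j\rvert^2 \leq c^{j-1}\lvert {\bf x}_j^\ast\rvert^2$, and then obtain (ii) and (iii) by multiplying these inequalities and using $d(L) = \prod_j\lvert{\bf x}_j^\ast\rvert$. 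The key elementary inequality you isolate, $1 + \tfrac{c}{4}\cdot\tfrac{c^{j-1}-1}{c-1} \leq c^{j-1}$ (equivalently $(c^{j-1}-1)(3c-4)\geq 0$), is handled correctly, and all the bookkeeping checks out.
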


\begin{theorem}[\textup{A. K. Lenstra, H. W. Lenstra and L. Lov\'asz, \cite[Proposition 1.39]{lenstra1982factoring}; as described, e.g., in \cite[Proposition 9.4]{bremner2011lattice}} \label{LLL theorem}]
Given rational numbers 
$x_1, x_2, \dots, x_n$ 
and 
$\delta$ 
satisfying 
$0 < \delta < 1$, 
there exists a polynomial-time algorithm \textup(called the LLL algorithm\textup) which finds integers 
$b \in \mathbb{N}$ 
and 
$a_1, \dots, a_n \in \mathbb{Z}$ 
such that
\begin{equation} \label{LLL theorem inequalities}
\left\lvert x_j - \frac{a_j}{b} \right\rvert \leq \frac{\delta}{b},\quad \textup{and}\quad 1 \leq b \leq 2^{\frac{n(n + 1)}{4}}\delta^{-n} \textup{ for } j = 1, \dots, n.
\end{equation}
\end{theorem}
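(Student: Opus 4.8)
The plan is to build a full-rank lattice $L \subset \mathbb{R}^{n+1}$ whose reduced basis, produced by the LLL algorithm of Proposition \ref{proposition for LLL applied to simultaneous Diophantine approximation}, automatically encodes a good simultaneous Diophantine approximation. Concretely, choose a large integer scaling parameter $C$ (to be fixed at the end in terms of $\delta$ and $n$), and let $L$ be the lattice generated by the rows of the $(n+1)\times(n+1)$ matrix
\[
X = \begin{pmatrix} 1 & Cx_1 & Cx_2 & \cdots & Cx_n \\ 0 & C & 0 & \cdots & 0 \\ 0 & 0 & C & \cdots & 0 \\ \vdots & & & \ddots & \vdots \\ 0 & 0 & 0 & \cdots & C \end{pmatrix},
\]
so that a generic lattice vector has the form $\bigl(b,\ C(bx_1 - a_1),\ \dots,\ C(bx_n - a_n)\bigr)$ for $b \in \mathbb{Z}$ and $a_1,\dots,a_n \in \mathbb{Z}$. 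Since the $x_j$ are rational, $L$ is genuinely a lattice in the sense of Definition \ref{definition of a full rank lattice}, and its determinant is $d(L) = C^n$ (the matrix is upper triangular). First I would run the LLL algorithm to obtain an $\alpha$-reduced basis of $L$ — taking, say, $\alpha = 3/4$ so that $4/(4\alpha - 1) = 2$ — and let ${\bf x}_1 = (b, C(bx_1 - a_1), \dots, C(bx_n - a_n))$ be the first vector of that reduced basis.

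Next I would extract the estimates. By part (iii) of Proposition \ref{proposition for LLL applied to simultaneous Diophantine approximation} with $4/(4\alpha-1) = 2$, the first reduced basis vector satisfies $\lvert {\bf x}_1 \rvert \leq 2^{n/4} d(L)^{1/(n+1)} = 2^{n/4} C^{n/(n+1)}$. Since every coordinate of ${\bf x}_1$ is bounded in absolute value by $\lvert {\bf x}_1 \rvert$, the first coordinate gives $\lvert b \rvert \leq 2^{n/4} C^{n/(n+1)}$, and the remaining coordinates give $C \lvert b x_j - a_j \rvert \leq 2^{n/4} C^{n/(n+1)}$, hence $\lvert b x_j - a_j \rvert \leq 2^{n/4} C^{-1/(n+1)}$ for each $j = 1,\dots,n$. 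Dividing the latter by $\lvert b\rvert$ would give $\lvert x_j - a_j/b \rvert \leq 2^{n/4} C^{-1/(n+1)} / \lvert b\rvert$, so to force the right-hand side below $\delta / \lvert b \rvert$ it suffices to choose $C$ so that $2^{n/4} C^{-1/(n+1)} \leq \delta$, i.e. $C \geq 2^{n(n+1)/4}\delta^{-(n+1)}$. Taking $C$ to be the smallest such integer then yields the denominator bound $\lvert b \rvert \leq 2^{n/4} C^{n/(n+1)} \leq 2^{n/4} \cdot 2^{n^2/4} \delta^{-n} = 2^{n(n+1)/4}\delta^{-n}$ after absorbing the rounding of $C$, which is exactly the stated inequality $1 \leq b \leq 2^{n(n+1)/4}\delta^{-n}$ once one checks $b \neq 0$ (a lattice vector with zero first coordinate lies in the sublattice $\{0\}\times (C\mathbb{Z})^n$, all of whose nonzero elements have norm $\geq C > \lvert {\bf x}_1\rvert$, so ${\bf x}_1$ cannot be there) and, if needed, replaces ${\bf x}_1$ by $-{\bf x}_1$ to make $b$ positive.

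The main obstacle is the bookkeeping around the scaling constant $C$: one must pick $C$ large enough that the approximation inequality holds, yet the same $C$ feeds back into the denominator bound through $d(L)^{1/(n+1)} = C^{n/(n+1)}$, so the two constraints have to be balanced so that the exponents of $2$ and of $\delta$ come out exactly as claimed. A secondary point requiring care is that $C$ must be an \emph{integer} for $L$ to be a lattice over $\mathbb{Z}$ (or, alternatively, one clears denominators of the $x_j$ first and tracks the effect); rounding $C$ up to the next integer only helps the approximation inequality and worsens the denominator bound by a bounded multiplicative factor, which must be shown to be harmless — this is where a slightly generous choice of $\alpha$, or a small explicit slack, makes the final constants close cleanly. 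Finally, the polynomial-time claim is inherited directly from the LLL algorithm: the entries of $X$ are rationals of controlled bit-size (polynomial in $n$, the bit-sizes of the $x_j$, and $\log(1/\delta)$), and Proposition \ref{proposition for LLL applied to simultaneous Diophantine approximation} together with the complexity statement recalled from \cite[Proposition 1.26]{lenstra1982factoring} guarantees the reduced basis is computed within a polynomial number of arithmetic operations.
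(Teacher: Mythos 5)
Your proposal is correct and is essentially the paper's argument, seen through a global rescaling of the lattice. The paper works with the basis matrix whose $(1,1)$-entry is $2^{-n(n+1)/4}\delta^{n+1}$ and whose remaining diagonal entries are $-1$; multiplying your matrix by $1/C$ and setting $C = 2^{n(n+1)/4}\delta^{-(n+1)}$ produces exactly that matrix (up to replacing $C$ by $-C$ on the lower diagonal, a unimodular change), and since a global positive rescaling of a lattice changes neither the Gram--Schmidt coefficients nor the Lov\'asz condition in Definition \ref{reduced basis}, the two constructions yield the same LLL-reduced basis up to scale. The one loose end you flagged — that $C$ must be an integer, hence must be rounded, hence the bound $\lvert b\rvert \le 2^{n/4}C^{n/(n+1)}$ might overshoot $2^{n(n+1)/4}\delta^{-n}$ — is not actually a gap: Definition \ref{definition of a full rank lattice} allows an arbitrary real basis matrix, so nothing forces $C$ to be an integer. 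Indeed, the paper's own $(1,1)$-entry $2^{-n(n+1)/4}\delta^{n+1}$ is irrational whenever $n(n+1)/4\notin\mathbb{Z}$ (e.g., $n\equiv 1,2 \pmod 4$), so the paper implicitly tolerates irrational entries too; both writeups silently defer the purely computational issue of feeding such a number to a rational-arithmetic LLL implementation. Taking $C = 2^{n(n+1)/4}\delta^{-(n+1)}$ exactly, your two estimates become $\lvert b x_j - a_j\rvert \le 2^{n/4}C^{-1/(n+1)} = \delta$ and $\lvert b\rvert \le 2^{n/4}C^{n/(n+1)} = 2^{n(n+1)/4}\delta^{-n}$, with no slack to absorb; this is precisely the paper's choice of scaling, baked into the matrix from the start so that Proposition \ref{proposition for LLL applied to simultaneous Diophantine approximation}(iii) directly yields $\lvert {\bf y}_1\rvert \le \delta$. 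Your argument that $b\neq 0$ (a vector with vanishing first coordinate lies in $\{0\}\times(C\mathbb{Z})^n$ and so has norm at least $C$, which exceeds the bound on $\lvert{\bf x}_1\rvert$ since $\delta<1$) is the rescaled version of the paper's observation that $b=0$ would force $\lvert{\bf y}_1\rvert\ge 1>\delta$.
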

\begin{proof}
Let 
$L$ 
be the lattice of rank 
$n + 1$ 
with basis matrix
\[ X =
\begin{pmatrix}
2^{\frac{-n(n + 1)}{4}}\delta^{n + 1} & x_1 & \cdots & x_n \\
0 & -1 & \cdots & 0 \\
\vdots & \vdots & \ddots & \vdots \\
0 & 0 & \cdots & -1
\end{pmatrix}
. \]
Using the LLL Algorithm, generate a reduced basis 
$\beta = \{{\bf y}_1, \dots, {\bf y}_{n}\}$ 
for 
$L$, 
with reduction parameter 
$\alpha = 3/4$. 
Then, if 
$Y$ 
denotes the basis matrix of 
$L$ 
for 
$\beta$, 
Proposition \ref{basis matrix} says that there exists an 
$(n + 1) \times (n + 1)$ 
matrix 
$C$ 
with integer entries such that 
$Y = CX$. 
That is, there exist 
$c_{j,k} \in \mathbb{Z}$ 
for 
$0 \leq j,k \leq n$ 
such that
\[ Y = 
\begin{pmatrix}
c_{0,0} & c_{0,1} & \cdots & c_{0,n} \\
c_{1,0} & c_{1,1} & \cdots & c_{1,n} \\
\vdots & \vdots & \ddots & \vdots \\
c_{n,0} & c_{n,1} & \cdots & c_{n,n} \\
\end{pmatrix}
\cdot
\begin{pmatrix}
2^{\frac{-n(n + 1)}{4}}\delta^{n + 1} & x_1 & \cdots & x_n \\
0 & -1 & \cdots & 0 \\
\vdots & \vdots & \ddots & \vdots \\
0 & 0 & \cdots & -1
\end{pmatrix}
. \]
In particular,
\[ {\bf y}_1 = \left(c_{0,0}2^{\frac{-n(n + 1)}{4}}\delta^{n + 1}, c_{0,0}x_2 - c_{0,1}, \dots, c_{0,0}x_n - c_{0,n} \right). \]
Put 
$b = c_{0,0}$, 
and for 
$j = 1, \dots, n$, 
put 
$a_j = c_{0,j}$. 
Then,
It follows from the third inequality (i.e., from part (iii) in Proposition \ref{proposition for LLL applied to simultaneous Diophantine approximation}) that
\begin{equation} \label{inequality involving delta} 
\lvert {\bf y}_1 \rvert \leq 2^{\frac{n}{4}}d(L)^{\frac{1}{n + 1}} = \delta < 1.
\end{equation}
Note that if 
$b = 0$, 
then 
$\lvert {\bf y}_1 \rvert \geq 1$, 
where
\[ {\bf y}_1 = \left(0, -a_1, -a_2, \dots, -a_n\right); \]
but that contradicts (\ref{inequality involving delta}). Upon replacing 
${\bf y}_1$ by $-{\bf y}_1$, 
it can be assumed without loss of generality that 
$b \geq 1$. 
Therefore, since the length of 
${\bf y}_1$ is greater than or equal to any of the components of 
${\bf y}_1$, 
\[ 2^{\frac{-n(n + 1)}{4}}\delta^{n + 1}b \leq \lvert {\bf y}_1 \rvert \leq \delta; \] 
so that
\[ b \leq 2^{\frac{n(n + 1)}{4}}\delta^{-n} \]
and hence,
\[ \left\lvert x_j - \frac{a_j}{b}\right \rvert < \frac{\delta}{b},\quad \text{for } 1 \leq j \leq n. \]
This completes the main part of the proof of the theorem. Moreover, since the number of arithmetic operations needed by LLL is $O(n^4\log B)$, where $B$ is a constant that is explicitly determined from the rows of $X$ (see \cite[Proposition 1.26]{lenstra1982factoring}), we now have the desired polynomial-time algorithm. 
\end{proof}

\subsubsection{Description of Our Current Implementation} \label{section 3.2.1}
There have been a number of implementations of the LLL algorithm for generating simultaneous Diophantine approximations to a set of real numbers
\begin{equation} \label{numbers to approximate}
x_1, x_2, \dots, x_N,
\end{equation}
aimed at generating approximations with bounded Dirichlet coefficient (see, e.g., \cite[Definition 1.2, p. 168]{bosma2013finding}) and prescribed quality; see, e.g., \cite{heine1868allgemeine}, \cite{lagarias1985the}, and \cite{bosma2013finding}. The current implementation is focused on computing sequences of increasingly good, simultaneous Diophantine approximations.

Any practical implementation of the LLL algorithm uses rational numbers. For example, in an iterative version of the LLL algorithm from \cite{bosma2013finding}, which finds higher-dimensional simultaneous Diophantine approximations (see \cite[Chapter 2, Theorem 1E]{schmidt1980diophantine}), all of the irrational numbers in their implementation are approximated by rational numbers with denominator 
$2^M$,
for some 
$M \in \mathbb{Z}$ 
(i.e., by dyadic numbers). Our implementation follows the one in \cite[Section 9.2]{bremner2011lattice} and uses the continued fraction process (see, e.g., \cite[Chapter 1]{schmidt1980diophantine}), which generates, for any real number 
$\alpha$, 
an infinite sequence of ``reduced fractions''
\[ \frac{a_1}{b_1}, \frac{a_2}{b_2}, \frac{a_3}{b_3}, \dots \]
that approximate $\alpha$. Each rational number 
$a_j/b_j$ 
is called the 
$j^{\textup{th}}$ {\it convergent} to 
$\alpha$, 
and it is a well-known fact that every convergent 
$a_j/b_j$ 
to 
$\alpha$ 
satisfies the inequality
\[ \left\lvert \alpha - \frac{a_j}{b_j} \right\rvert < \frac{1}{b_j^2}; \]
see, e.g., \cite[Chapter 1]{schmidt1980diophantine}. Therefore, since 
$b_j \to \infty$ 
(see the proof of \cite[Chapter 1, Lemma 4D]{schmidt1980diophantine}), for any real number 
$Q > 1$, 
there exists 
$j \geq 1$ 
such that the ordered pair $Q$, $(b_j,a_j)$ satisfies inequality (\ref{corollary to inequality}) from Lemma \ref{simultaneous approximation}, and thus forms a simultaneous Diophantine approximation to 
$\alpha$; 
see the paragraph preceding the statement of Theorem \ref{approximation theorem}.

Denote the 
$n_j^{\textup{th}}$ 
convergent to the real number 
$x_j$ 
by
\[ \frac{a_{j,n_j}}{b_{j,n_j}},\quad \text{for } j = 1, \dots, N. \]
We start by initializing 
$0 < \delta < 1$ 
(close to 1) and a positive integer 
$n_{\text{steps}}$, 
which together determine the {\it step-size} 
$\Delta\delta = \delta/n_\text{steps}$, 
and then proceed to generate the first convergents
\begin{equation} \label{first convergents}
\frac{a_{1,1}}{b_{1,1}}, \frac{a_{2,1}}{b_{2,1}}, \dots, \frac{a_{N,1}}{b_{N,1}}
\end{equation}
to the real numbers in (\ref{numbers to approximate}) via the continued fraction process. Then, as in the proof of Theorem \ref{LLL theorem}, we take the convergents in (\ref{first convergents}) along with the current value of 
$\delta$, 
and use them both to generate integers 
$b, a_1, \dots, a_n$ 
that satisfy each of the inequalities in (\ref{LLL theorem inequalities}). 

\begin{table}[h]
\centering
\footnotesize
\begin{tabular}[t]{l l l}
$\frac{a_{j,n_j}}{b_{j,n_j}}$ $\circ$ & $\frac{a_j}{b}$ $\bullet$ & $x_1 = \log_2(3)$, $x_2 = \log_2(5)$, $x_3 = \log_2(7)$ $\Diamond$ \\
\midrule
& & $\delta = .10$ \\																															  
$\frac{1054}{665}$ & $\frac{4953}{3125}$ & \includegraphics[width=0.7\textwidth]{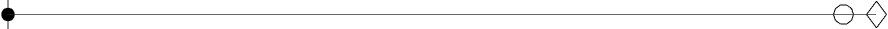}   \\ \\ 
$\frac{1493}{643}$  & $\frac{7256}{3125}$ & \includegraphics[width=0.7\textwidth]{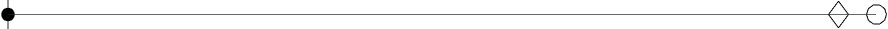}   \\ \\ 
$\frac{6718}{2393}$  & $\frac{8773}{3125}$ & \includegraphics[width=0.7\textwidth]{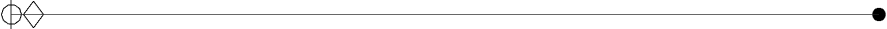}  \\

\midrule
& & $\delta = .01$ \\
$\frac{1054}{665}$ & $\frac{4476254}{2824202}$ & \includegraphics[width=0.7\textwidth]{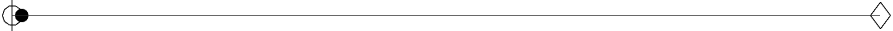}   \\ \\ 
$\frac{1493}{643}$ & $\frac{6557595}{2824202}$ & \includegraphics[width=0.7\textwidth]{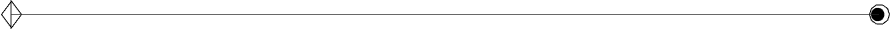}   \\ \\ 
$\frac{6718}{2393}$ & $\frac{7928537}{2824202}$ & \includegraphics[width=0.7\textwidth]{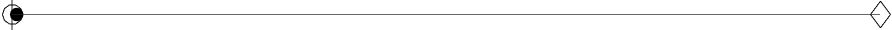}  \\

\midrule	
& & $\delta = .01$ \\
$\frac{50508}{31867}$  & $\frac{163519}{103169}$   & \includegraphics[width=0.7\textwidth]{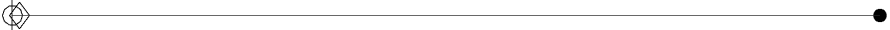} \\ \\ 
$\frac{177797}{76573}$ & $\frac{239551}{103169}$   & \includegraphics[width=0.7\textwidth]{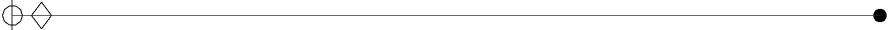} \\ \\ 
$\frac{248027}{88349}$ & $\frac{289632}{103169}$   & \includegraphics[width=0.7\textwidth]{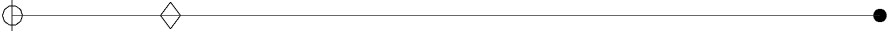} \\ \\
\end{tabular}
\caption{A table showing how the convergents are updated for a fixed $\delta$ to generate two simultaneous Diophantine approximations to the real numbers $\log_2(3)$, $\log_2(5)$, $\log_2(7)$, using the procedure outlined in Section \ref{section 3.2.1}.}
\label{line plots}
\end{table}

For each $1 \leq j \leq N$, define the {\it errors}
\[ E_1(j): = \left\lvert x_j - \frac{a_{j,n_j}}{b_{j,n_j}} \right\rvert\quad \text{ and }\quad E_2(j): = \left\lvert \frac{a_j}{b} - \frac{a_{j,n_j}}{b_{j,n_j}} \right\rvert. \]
We distinguish two cases.

\begin{enumerate}
    \item[] {\it Case 1.}
    If 
    \begin{equation} \label{reasonable}
    E_2(j) \geq 2E_1(j),\quad \text{for } j = 1, \dots, N,
    \end{equation}
    then the simultaneous Diophantine approximation
    \[ Q = \min_{1 \leq j \leq N}\frac{1}{\lvert x_jb - a_j \rvert},\ (b, a_1, \dots, a_N) \] 
    to the real numbers in (\ref{numbers to approximate}) is obtained, and the entire process is repeated after replacing 
    $\delta$ 
    with 
    $\delta - \Delta\delta$. 
    Note that since 
    $\delta < 1$ 
    and 
    $b \geq 1$, we have that  
    \[ \left\lvert x_j - \frac{a_j}{b} \right\rvert < 1,\quad \text{for } j = 1, \dots, N; \]
    so that 
    $Q > 1$. 
    In this case, we say that none of the rational approximations 
    $a_j/b$ 
    can distinguish between the convergent 
    $a_{j,n_j}/b_{j,n_j}$, 
    and the real number $x_j$.
    
    \item[] {\it Case 2.}
    For each 
    $1 \leq j \leq N$ 
    with 
    $E_2(j) < 2E_1(j)$, 
    the convergent 
    $a_{j,n_j}/b_{j,n_j}$ 
    is updated to the next convergent 
    $a_{j,n_{j} + 1}/b_{j,n_{j} + 1}$, 
    which is closer to 
    $x_j$. 
    With the current value of 
    $\delta$ 
    and the updated convergents, a new set of integers 
    $b, a_1, \dots, a_n$ 
    satisfying the inequalities in (\ref{LLL theorem inequalities}) is achieved, and we check whether we are still in {\it Case 2}. Since 
    $\delta$ 
    is fixed, this process of updating convergents and checking will eventually terminate because the denominators of the continued fractions tend to infinity, putting us back in {\it Case 1}, and therefore yielding another simultaneous Diophantine approximation; see Table \ref{line plots}.
\end{enumerate}

\begin{remark}
We choose to use the continued fraction process in our implementation because it is very efficient. Specifically, each convergent is best possible,\footnote{A rational number $a/b$ is a {\it best possible approximation} of a real number $\alpha$ if $\lvert \alpha - a/b \rvert$ does not decrease if $a/b$ is replaced by another rational number written in reduced form and with a smaller denominator.} and in principle, the continued fraction process is identical to the Euclidean algorithm. We also note that our implementation extends the one from \cite{bremner2011lattice} because it finds a sequence of meaningful simultaneous Diophantine approximations, in the sense described in Case 1 and Case 2 above. While it is true that the rational implementation in \cite{bosma2013finding} also finds meaningful approximations, it does not use the continued fraction process.
\end{remark}

\section{The Quasiperiodic Patterns in the Nonlattice Case} \label{section 4}
Using the LSA algorithm, together with our implementation of the LLL algorithm for simultaneous Diophantine approximation (see Section \ref{section 3.2.1}), and the multiprecision polynomial solver MPSolve, which is due to D. A. Bini, G. Fiorentino and L. Robol \cite{bini2000design,bini2014solving}, we study examples of nonlattice Dirichlet polynomials previously studied in, e.g., \cite{lapidus2003complex,lapidus2012fractal}, as well as several new examples, all aimed at illustrating the discussion in the first paragraph, following the statement of Theorem \ref{approximation theorem}, which describes how the quasiperiodic patterns from the complex roots of nonlattice Dirichlet polynomials begin to emerge. 

The visual exploration of the quasiperiodic patterns in this section extend those from the previous works by the two authors of \cite{lapidus2000fractal,lapidus2003complex,lapidus2006fractal,lapidus2012fractal}; see, especially, \cite[Figure 3.6, p.87]{lapidus2012fractal}, \cite[Figure 9, p.62]{lapidus2003complex}, and also \cite[Figure 3.2, p. 71 ]{lapidus2012fractal}, along with the associated examples, which show the roots of several lattice string approximations to nonlattice Dirichlet polynomials in sequence, and illustrate the emergence of a quasiperiodic pattern. Our approach starts by taking the best lattice string approximation $f_q(s)$ that we can compute to a nonlattice Dirichlet polynomial 
$f(s)$, 
where, by ``best'', we mean having the largest 
$\varepsilon$-region of stability. 
We then compute the roots of 
$f(s)$, 
using the complex version of Newton's method with the stable roots of 
$f_q(s)$ 
as initial guesses, up to a region large enough to include several periods of stable roots from a few increasingly good lattice string approximations.\footnote{All of the applications of the LSA algorithm to the examples given in Section \ref{section 4} use $\varepsilon = 1/10$. While this approximation error is small enough to separate the roots of $f(s)$ vertically, it does not separate them horizontally. Therefore, it is possible that we miss some roots when applying Newton's method. In any case, the roots which we obtain suffice to show the emergence of a quasiperiodic pattern, which is the central aim of this section.} By plotting the roots of these lattice string approximations against the roots of $f(s)$, 
we show how the roots of 
$f(s)$ 
are near the roots of each approximation for a certain number of periods of that approximation (sometimes, only for a fraction of a period), and how they eventually start to move away from those roots. We also show plots of the best lattice string approximations that we computed, giving an impression of the quasiperiodic pattern on a larger scale; compare with \cite[Figure 3.7, p. 88]{lapidus2012fractal}. We also present an especially crafted nonlattice Dirichlet polynomial whose roots exhibit a global structure which has not been observed before.

If the rank of the additive group 
$G$ 
corresponding to a nonlattice Dirichlet polynomial 
$f(s)$ is equal to two, we can use the continued fraction process alone in order to generate lattice string approximations to 
$f(s)$. 
Otherwise, the rank is more than two, and we turn to our implementation of the LLL algorithm for simultaneous Diophantine approximation. In any case, we plot the roots of a lattice string approximation 
$f_q(s)$ 
corresponding to the simultaneous Diophantine approximation given by the pair 
$Q$, $(q,k_2, \dots, k_N)$ 
by computing the roots of an associated polynomial in 
$\mathbb{C}[x]$ 
with degree 
$k_N$, 
and which is typically sparse; see the discussion surrounding the equation in (\ref{complex poly}) from Section \ref{section 2.2.1}. In Section \ref{section 4.2}, we show both generic and nongeneric nonlattice examples with rank two, and in Section \ref{section 4.3}, we show generic and nongeneric nonlattice examples with rank three or more. In practice, lattice string approximations with at least one period of stable roots correspond to polynomials with very large degree. Consequently, it takes a significant amount of computing power to implement the approach described above. In \cite[Section 3.8]{lapidus2012fractal}, the first two authors of the present paper state: ``The maximal degree 5000 is the limit of computation: It took several hours with our software on a Sun workstation to compute the golden diagram, which involved solving a polynomial equation of degree 4181. However, finding the roots of the polynomial is the most time-consuming part of the computation. Since these polynomials contain only a few monomials, there may exist ways to speed up this part of the computation.'' In the current paper, using MPSolve and the high performance computer ELSA at the third author's institution, the maximal degree 300000k is now our limit. 

In the present paper, we are careful about the combination of scaling ratios and multiplicities from a nonlattice Dirichlet polynomial 
$f(s)$. 
Indeed, if they are not balanced properly, even the best lattice string approximations that we would be able to compute will have very few stable roots. Specifically, there is still the issue of the size of the LSA constant 
$C$ 
in the radius 
$\varepsilon CQ{\bf p}_q$ 
of the 
$\varepsilon$-region of stability, 
and which in some cases can be smaller than the reciprocal of 
$Q$. 
Therefore, we pay special attention to the LSA constant 
$C$ 
and show how to reverse engineer examples with 
$C$ 
being not too small. Hence, not only do we provide a new implementation of the LLL algorithm that enables us to deal with more complicated examples, but we also give a ``starter-kit'' for the interested reader to explore the quasiperiodic patterns of nonlattice Dirichlet polynomials.

\subsection{A Special Class of Nonlattice Dirichlet Polynomials} \label{section 4.1}
\hfill \\
Let 
\[ f(s) = 1 - \sum_{j = 1}^N m_jr_j^s \]
be a nonlattice Dirichlet polynomial, which is associated to the real numbers 
\begin{equation} \label{associated real numbers}
\frac{w_2}{w_1}, \frac{w_3}{w_1}, \dots, \frac{w_N}{w_1},
\end{equation}
explicitly determined from the scaling ratios of 
$f(s)$. 
Setting 
$\alpha_1 = 1$ 
and 
$\alpha_j = w_j/w_1$, 
for all 
$2 \leq j \leq N$, 
we write
\[ f(s) = 1 - \sum_{j = 1}^N m_jr_1^{\alpha_js}. \]
Recall that each simultaneous Diophantine approximation given by the ordered pair 
$Q$, $(q, k_2, \dots, k_N)$ 
to the real numbers 
$\alpha_2, \dots, \alpha_N$, 
such that 
$k_j/q$ 
approximates 
$\alpha_j$, 
for all integers 
$j$ 
such that
$2 \leq j \leq N$, 
determines the lattice string approximation
\[ f_q(s) = 1 - m_1r_1^s - \sum_{j = 2}^N m_j\left(r_j^{k_j/q}\right)^s \]
to 
$f(s)$, 
and that for any approximation error 
$\varepsilon > 0$, 
the radius of the 
$\varepsilon$-region of stability 
$B_{\varepsilon}(q,Q)$ of $f_q(s)$ 
is 
$\varepsilon CQ{\bf p}_q$, 
where
\[ C := \frac{1}{2\pi}\sum_{j = 1}^N \lvert m_j \rvert\left(\frac{\sum_{j = 0}^N \lvert m_j \rvert}{\min\left\{1,\lvert m_N \rvert\right\}}\right)^{\frac{-2w_N}{\min\left\{w_1,w_{N} - w_{N - 1}\right\}}} \]
is the LSA constant of 
$f(s)$ 
from Theorem \ref{approximation theorem}, and 
${\bf p}_q$ 
is the oscillatory period of 
$f_q(s)$. 

Consider the class of nonlattice Dirichlet polynomials of the form
\begin{equation} \label{type 1}
f(s) = 1 - m_1r^s - m_2r^{\alpha_2s} - m_3r^{\alpha_3s} - \cdots - m_{N - 1}r^{\alpha_{N - 1}s} - r^{\alpha_{N}s},
\end{equation}
where 
$1 > r > 0$
and 
$m_1, m_2, \dots, m_{N - 1} > 0$. 
Note that
\[ 1 < \alpha_2 <  \alpha_3 < \dots < \alpha_{N - 1} < \alpha_{N}, \]
and that since 
$f(s)$ 
is nonlattice, 
$\alpha_j$ 
is irrational, for some integer 
$j$ 
such that
$2 \leq j \leq N$. 
All of the examples of nonlattice Dirichlet polynomials shown in \cite{lapidus2000fractal,lapidus2003complex,lapidus2006fractal,lapidus2012fractal} are of the form (\ref{type 1}), except for both the scaled and unscaled versions of the example in \cite[Example 3.55, p. 113]{lapidus2012fractal}. The following new theorem provides infinitely many nonlattice Dirichlet polynomials of the form (\ref{type 1}), with LSA constant arbitrarily close to 
$(32\pi)^{-1}$.    
 
\begin{theorem} \label{type 1 theorem}
Let 
$f(s)$ 
be a rank two nonlattice Dirichlet polynomial of the form (\ref{type 1}). Then, the LSA constant of 
$f(s)$ 
is given by
\begin{equation} \label{formula for C}
C = C\left(\xi(N),\alpha_{N - 1},\alpha_{N}\right) = \frac{\xi(N) + 1}{2\pi}\left(\frac{1}{\left(\xi(N) + 2\right)^2} \right)^{\frac{\alpha_N}{\min\left\{1,\alpha_N - \alpha_{N - 1}\right\}}},
\end{equation}
where 
$\xi(N) := \sum_{j = 1}^{N - 1} \lvert m_j \rvert$.

Moreover, For any 
$\varepsilon > 0$, 
there exists a nonlattice Dirichlet polynomial of the form (\ref{type 1}) with LSA constant $C$ satisfying the inequality
$0 < (32\pi)^{-1} - C < \varepsilon$.  
\end{theorem}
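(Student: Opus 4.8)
The plan is to treat the two assertions in turn. Formula (\ref{formula for C}) follows by substituting the data of (\ref{type 1}) directly into the definition (\ref{LSA constant}): here $m_N = 1$, so $\min\{1,|m_N|\} = 1$, $\sum_{j=1}^N |m_j| = \xi(N) + 1$ and $\sum_{j=0}^N |m_j| = \xi(N) + 2$ (using $|m_0| = 1$); moreover $w_j = \alpha_j(-\log r)$, hence $w_N/\min\{w_1, w_N - w_{N-1}\} = \alpha_N/\min\{1, \alpha_N - \alpha_{N-1}\}$, and (\ref{formula for C}) drops out. In particular, $C$ does not depend on $r$. For the ``Moreover'' part it suffices, given $\varepsilon > 0$, to produce a single example, and I would take $N = 2$, where $\alpha_{N-1} = \alpha_1 = 1$ and $\alpha_N = \alpha_2$. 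Writing $\xi := \xi(2) = m_1 > 0$, formula (\ref{formula for C}) then reads
\begin{equation*}
C = \frac{\xi + 1}{2\pi\,(\xi + 2)^{2t}}, \qquad t := \frac{\alpha_2}{\min\{1,\,\alpha_2 - 1\}},
\end{equation*}
and for any $r \in (0,1)$ and any irrational $\alpha_2 > 1$, such an $f(s)$ is a rank two (indeed, generic) nonlattice Dirichlet polynomial of the form (\ref{type 1}).

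The first step is the uniform strict bound $C < (32\pi)^{-1}$. The key observation is that $t > 2$ for every admissible $\alpha_2$: on the interval $(1,2)$ one has $t = \alpha_2/(\alpha_2 - 1) > 2$, which decreases to $2$ as $\alpha_2 \to 2^-$; on $[2, \infty)$ one has $t = \alpha_2 \geq 2$; and $t = 2$ forces $\alpha_2 = 2$, which is rational and hence excluded in the nonlattice case. Since $\xi + 2 > 1$, it follows that $C < \frac{\xi + 1}{2\pi(\xi+2)^4}$. A one-variable computation shows that $g(\xi) := \frac{\xi+1}{(\xi+2)^4}$ has $g'(\xi) = -\frac{3\xi + 2}{(\xi+2)^5} < 0$ on $(0, \infty)$, so $g$ is strictly decreasing with $\sup_{\xi > 0} g(\xi) = \lim_{\xi \to 0^+} g(\xi) = \frac{1}{16}$; therefore $C < \frac{1}{2\pi} \cdot \frac{1}{16} = (32\pi)^{-1}$, i.e., $(32\pi)^{-1} - C > 0$. (By the same computation, now with $t = \alpha_N/\min\{1, \alpha_N - \alpha_{N-1}\} > 2$, the bound $C < (32\pi)^{-1}$ in fact holds for every rank two $f$ of the form (\ref{type 1}), not only for $N = 2$.)

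The second step establishes sharpness. Fix $\varepsilon > 0$. Since $g(\xi) \to \frac{1}{16}$ as $\xi \to 0^+$, choose $\xi_0 > 0$ with $(32\pi)^{-1} - \frac{\xi_0 + 1}{2\pi(\xi_0 + 2)^4} < \varepsilon/2$. With $\xi_0$ now fixed, the map $t \mapsto \frac{\xi_0 + 1}{2\pi(\xi_0 + 2)^{2t}}$ is continuous, so there is $\eta > 0$ such that $2 \leq t < 2 + \eta$ implies $\frac{\xi_0 + 1}{2\pi(\xi_0 + 2)^4} - \frac{\xi_0 + 1}{2\pi(\xi_0 + 2)^{2t}} < \varepsilon/2$. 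Because the irrationals are dense and $t = \alpha_2/(\alpha_2 - 1) \to 2^+$ as $\alpha_2 \to 2^-$, I can pick an irrational $\alpha_2 \in (1,2)$ with $t(\alpha_2) < 2 + \eta$. Then $m_1 = \xi_0$, this $\alpha_2$, and any $r \in (0,1)$ yield a rank two nonlattice Dirichlet polynomial of the form (\ref{type 1}) whose LSA constant $C$ satisfies $0 < (32\pi)^{-1} - C < \varepsilon$, as desired.

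The only genuine subtlety I anticipate is the interplay between the \emph{strict} inequality $t > 2$ — which rests precisely on excluding the rational value $\alpha_2 = 2$, i.e., on the nonlattice hypothesis — and the need to approach the value $t = 2$ through irrational parameters; density of the irrationals near $2$ resolves it, and this is exactly why $(32\pi)^{-1}$ is a supremum that is never attained. Everything else reduces to routine one-variable estimates.
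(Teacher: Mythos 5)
Your argument is correct, and the first part (the formula) matches the paper's direct substitution exactly. For the sharpness assertion your construction and the paper's differ in one respect worth noting: the paper imposes $\alpha_N - \alpha_{N-1} > 1$, so the exponent in (\ref{formula for C}) collapses to $\alpha_N$, and then lets $\xi(N) \to 0^+$ and $\alpha_N \to 2^+$; you instead take $N = 2$ and an irrational $\alpha_2 \in (1,2)$ tending to $2$ from below, so the other branch of the $\min$ is active and the exponent $\alpha_2/(\alpha_2 - 1) \to 2^+$. Both families drive $C$ to $(32\pi)^{-1}$ from below, so the two routes are equally valid and neither is intrinsically stronger. What you add, and what the paper leaves tacit, is the explicit verification that $C < (32\pi)^{-1}$: your one-variable check that $g(\xi) = (\xi+1)/(\xi+2)^4$ is strictly decreasing on $(0,\infty)$ with $g(0^+) = 1/16$, combined with the strict inequality $t > 2$ (which you correctly trace back to the nonlattice hypothesis excluding $\alpha_2 = 2$), gives the strict bound cleanly — and, as you observe, in fact gives $C < (32\pi)^{-1}$ uniformly over all rank two polynomials of the form (\ref{type 1}), a slightly stronger statement than the theorem asserts. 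Your continuity argument for choosing $\xi_0$ and $\alpha_2$ is also spelled out where the paper simply asserts the parameters can be ``manufactured.''
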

\begin{proof}
The formula for $C$ is found by a direct substitution; see Equation (\ref{LSA constant}) in Theorem \ref{approximation theorem} above. By requiring that $\alpha_N - \alpha_{N - 1} > 1$, we have
\[ \frac{\alpha_N}{\min\left\{1,\alpha_N - \alpha_{N - 1}\right\}} = \alpha_N. \]
Since we can always manufacture a nonlattice Dirichlet polynomial of the form (\ref{type 1}), with $\xi(N)$ arbitrarily close to zero and $\alpha_N$ arbitrarily close to 2 from the right, the theorem follows.
\end{proof}

\subsection{Rank Two Examples} \label{section 4.2}
\hfill \\
Recall from Definition \ref{rank and generic} that the rank of a nonlattice Dirichlet polynomial 
\[ f(s) = 1 - m_1r_1^s - m_2r_2^s - \cdots - m_Nr_N^s \] 
is equal to the number of rationally independent numbers in the set 
\[ \left\{-\log(r_1), -\log(r_2), \dots, -\log(r_N)\right\}. \]
If the rank is equal to two, then exactly one of the associated real numbers in (\ref{associated real numbers}) is irrational, and we can use the continued fraction process to generate lattice string approximations to 
$f(s)$: 

Suppose that only 
$\alpha = w_2/w_1$ 
is irrational. We start by computing a convergent 
$a/b$ 
to 
$\alpha$, 
and then we modify the remaining real numbers (which are all rational) so that they have denominator 
$b$. 
Then, we obtain the simultaneous Diophantine approximation given by the pair 
$Q$, 
$(q, k_2, k_3, \dots k_N)$ 
to the real numbers in (\ref{associated real numbers}), where 
$q = b$, $k_2 = a$, 
\[ \frac{k_j}{q} = \frac{w_j}{w_1},\quad \text{for } j = 3, \dots, N, \]
and 
\[ Q = \min_{2 \leq j \leq N}\frac{w_1}{\left\lvert w_jq - w_1k_j \right\rvert} = \frac{w_1}{\left\lvert w_2b - w_1a \right\rvert}. \]
Finally, we obtain the lattice sting approximation
\[ f_q(s) = 1 - m_1\left(r_1\right)^s - \sum_{j = 2}^{N} m_j\left(r_1^{k_j/q}\right)^s, \] 
with generator 
$r_1^{1/q}$ 
and oscillatory period 
${\bf p}_q = \frac{2\pi q}{\log r_1^{-1}}$. 

Otherwise, the rank is greater than two 
(since $f(s)$ is nonlattice), 
and in that case we use the LLL algorithm in order to generate lattice string approximations; examples with rank greater than two are discussed in Section \ref{section 4.3}.

\begin{table}[ht]
	\centering
	\begin{tabular}{l l l}
	$f(s) = 1 - 2^{-s} - 3^{-s}$ & & $C \approx 0.0008$ \\
	\hline
	\hline
	$Q,\quad (q,k_2)$ & ${\bf p}_q$ & $\varepsilon CQ{\bf p}$ \\
	\hline
	& & \\
	$192530,\ (111202,176251)$ & $1.00 \times 10^{6}$ & $1.60 \times 10^{7}$ \\
	$189140,\ (79335,125743)$ & 719150 & $1.12 \times 10^{7}$ \\
	$95410,\ (31867,50508)$ & 288865 & $2.27 \times 10^{6}$ \\
	$38096.3,\ (15601,24727)$ & 141419 & $445358$ \\
	$15878.2,\ (665,1054)$ & 6028.04 & $7912.17$ \\
	$678.06,\ (306,485)$ & 2773.8 & $155.478 < {\bf p}_{306}$ \\
	$331.94,\ (53,84)$ & 480.43 & $13.18 < {\bf p}_{53}$ \\
	& & \\
	\end{tabular}
	\caption{Simultaneous Diophantine approximations to the real number $\log_2(3)$ associated to the 2-3 polynomial, and specifications for the corresponding lattice string approximations.} \label{2-3 poly data}
\end{table}

For any lattice string approximation 
$f_q(s)$ 
to a nonlattice Dirichlet polynomial 
$f(s)$, 
we define
\[ a = \min\{\Re(s)\colon f_q(s) = 0\}\quad \text{and}\quad b = \max\{\Re(s)\colon f_q(s) = 0\}. \]
This notation will be used in Section \ref{section 4.2.1} just below.

\subsubsection{The 2-3 and the Golden Polynomials} \label{section 4.2.1}
\begin{example} \label{2-3 example}
The 2-3 {\it polynomial}
\[ f(s) = 1 - 2^{-s} - 3^{-s} \]
from, e.g., \cite[Section 2.3.5, pp. 49 -- 50 and Section 3.8, pp. 115, 117]{lapidus2012fractal}, has scaling ratios 
$r_1 = 2^{-1}$, $r_2 = 3^{-1}$ 
and multiplicities 
$m_1 = m_2 = 1$. 
It is also of the form (\ref{type 1}), with 
$N = 2$ and $\alpha_2 = \log_2(3)$. 

\begin{figure}[p]
	\centering
	\begin{subfigure}{0.5\textwidth}
		\centering
  		\includegraphics[width=.80\textwidth]{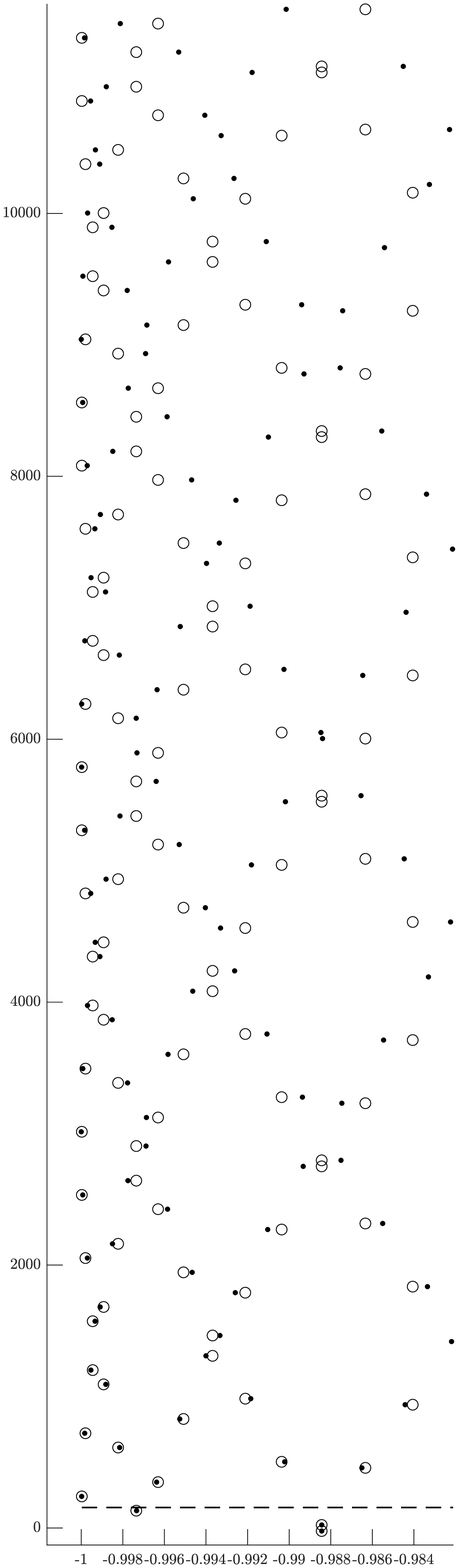} 
		\caption{} 
		\label{lsa_against_2-3_left}
	\end{subfigure}%
	\begin{subfigure}{0.5\textwidth}
  		\centering
  		\includegraphics[width=.80\textwidth]{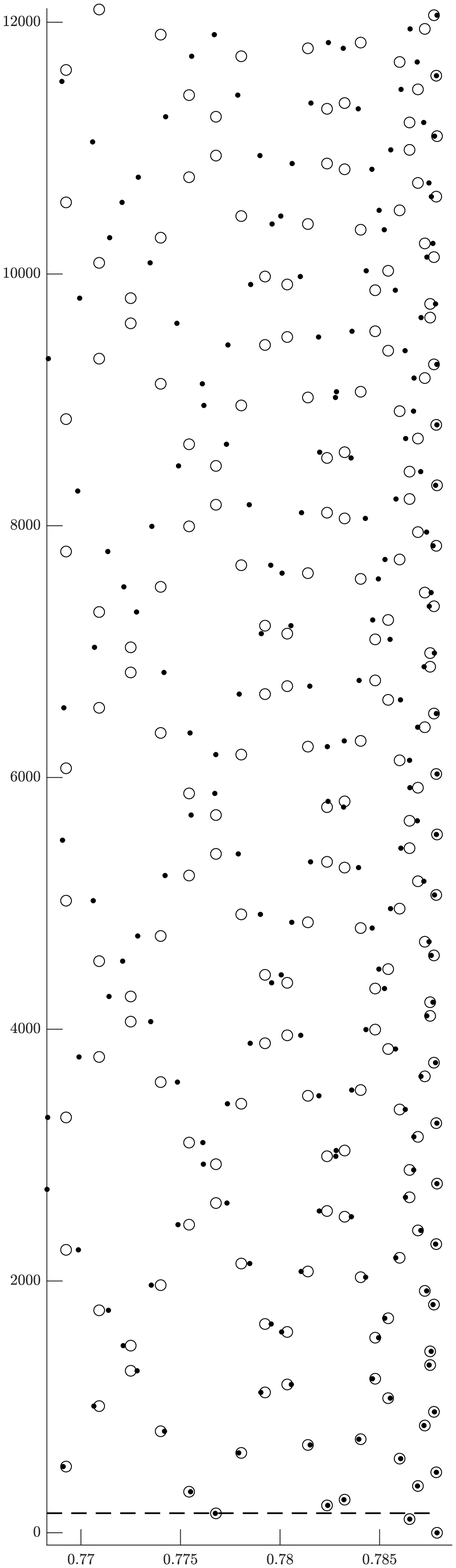} 
		\caption{} 
		\label{lsa_against_2-3_right}
   	\end{subfigure}
   	\caption{Roots of the 2-3 polynomial from Example \ref{2-3 example} near the vertical line (a) $\Re(z) = \min\{\Re(s)\colon f_{306}(s) = 0\}$, (b) $\Re(z) = \max\{\Re(s)\colon f_{306}(s) = 0\}$, moving away from the roots of the lattice string approximation $f_{306}(s)$ (marked with circles).}
\label{lsa_against_2-3}
\end{figure}

\begin{figure}[ht]
	\centering
	\includegraphics[width=.6\textwidth]{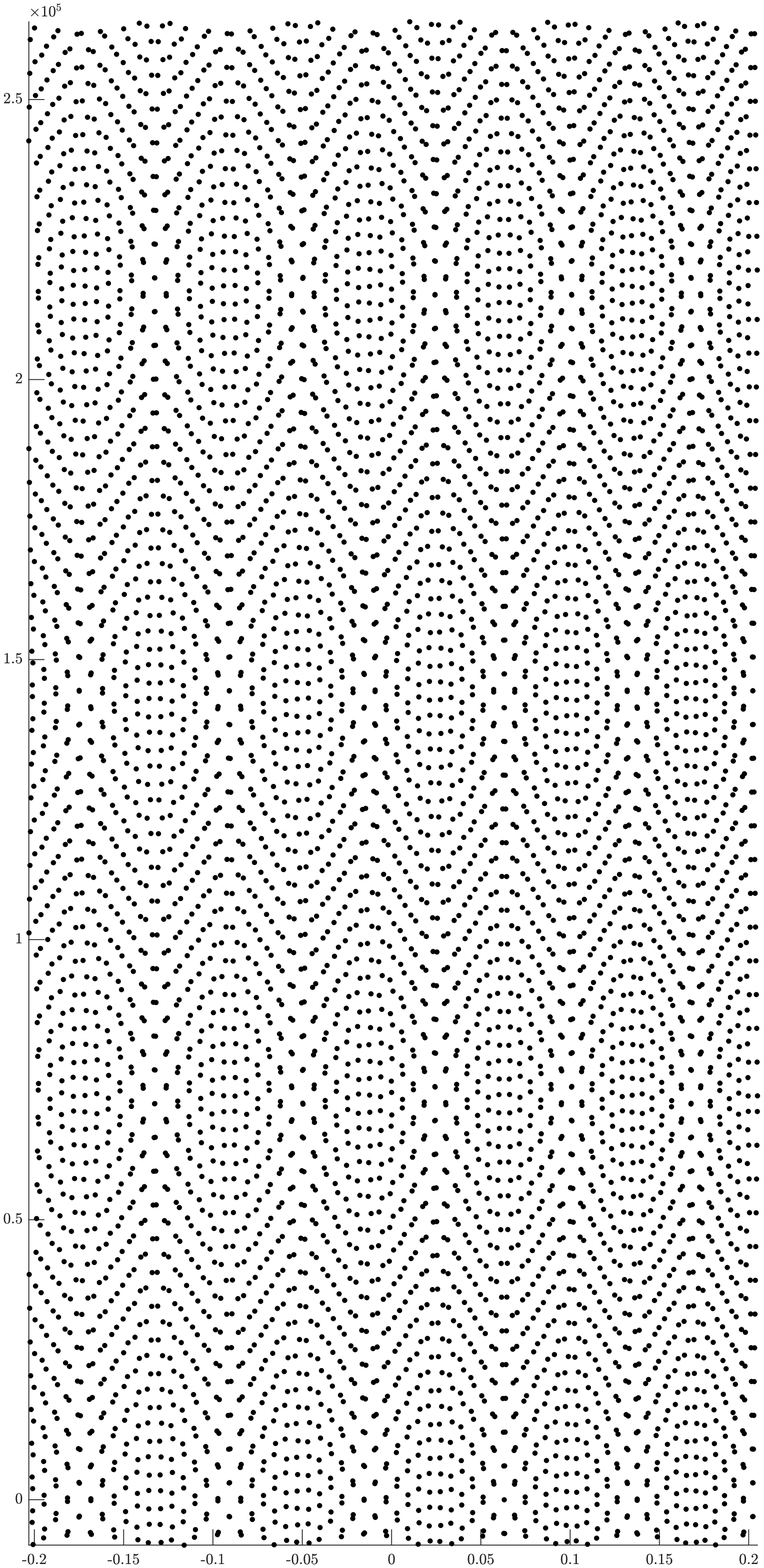} 
	\caption{Stable roots from the lattice string approximation $f_{111202}(s)$ to the 2-3 polynomial from Example \ref{2-3 example}. See Table \ref{2-3 poly data} for the corresponding data.}
   	\label{2-3 large scale plot}
\end{figure}

Table \ref{2-3 poly data} shows data for several lattice string approximations to the 2-3 polynomial. For example, the lattice string approximation
\begin{align*}
f_{15601}(s) &= 1 - 2^{-s} - 2^{-s\frac{24727}{15601}} \\
&= 1 - (2^{-s/15601})^{15601} - (2^{-s/15601})^{24727} 
\end{align*}
has about three periods of stable roots, and it is determined from the simultaneous Diophantine approximation given by the pair $Q = 38096.3$, $(15601,24727)$ to the real number $\log_2(3)$. Note that the best lattice string approximation to the 2-3 polynomial obtained in \cite{lapidus2003complex,lapidus2012fractal} comes from the convergent $485/306$, and does not even have one full period of stable roots. Figure \ref{lsa_against_2-3} shows how the roots of the 2-3 polynomial (marked with dots) are near the roots of $f_{306}(s)$ (marked with circles), and how they eventually start to move away from them. Notice that the point at which the roots of the 2-3 polynomial start to move away from the roots of the lattice string approximation agrees with the theoretical prediction from \cite{lapidus2012fractal}; see Theorem \ref{approximation theorem}. 

We also observe that the roots seem to stay close to the roots of $f_{306}$ for much longer near the extreme vertical lines $\Re(z) = a$ and $\Re(z) = b$. We do not have an explanation for this phenomenon, which occurs in all of the examples discussed in the current paper. In fact, Example \ref{type1_2 example} below shows the roots of a lattice string approximation staying close to a third line. Figure \ref{2-3 large scale plot} displays stable roots of the lattice string approximation $f_{111202}(s)$, giving an impression of the quasiperiodic pattern on a larger scale. We note that large scale plots like the ones in Figure \ref{2-3 large scale plot} were provided for only three out of the ten rank two examples from \cite{lapidus2003complex,lapidus2012fractal}. In the sequel, we will exhibit additional plots of this kind, and arising from far better lattice string approximations. Furthermore, these plots suggest that the patterns from the roots of very good lattice string approximations lie in a wide spectrum, all the way from highly disordered to regular; compare, especially, Figure \ref{2-3 large scale plot} with Figure \ref{3-4-13 large scale plot}.
\end{example}

\begin{example} \label{golden example}
The {\it golden polynomial}
\[ f(s) = 1 - 2^{-s} - 2^{-\phi s} \] 
from, e.g., \cite[Section 2.3.5, pp. 49, 51 -- 52, 53, and Section 3.6, pp.104 -- 106]{lapidus2012fractal}, has scaling ratios $r_1 = 2^{-1}$, $r_2 = 2^{-\phi}$ and multiplicities $m_1 = m_2 = 1$, where $\phi = \frac{1 + \sqrt{5}}{2}$ denotes the golden ratio. 

\begin{figure}[ht]
	\centering
	\includegraphics[width=.6\textwidth]{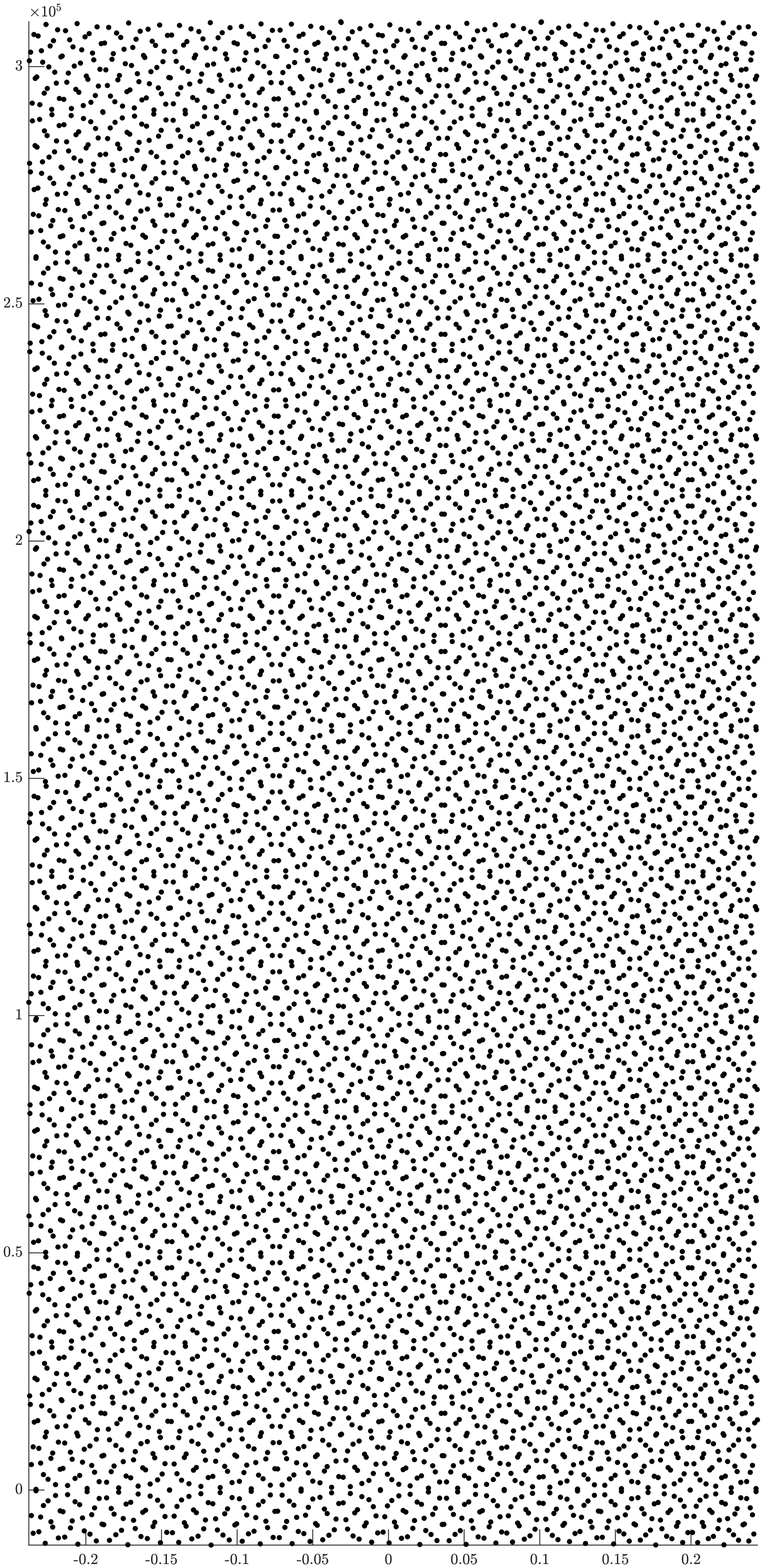} 
	\caption{Stable roots from the lattice string approximation $f_{121393}(s)$ to the golden polynomial from Example \ref{golden example}. See Table \ref{golden poly data} for the corresponding data.}
   	\label{golden large scale plot}
\end{figure}

\begin{table}[ht]
	\centering
	\begin{tabular}{l l l}
	$f(s) = 1 - 2^{-s} - 2^{-\phi s}$ & & $C \approx 0.001$ \\
	\hline
	\hline
	$Q,\quad (q,k_2)$ & ${\bf p}_q$ & $\varepsilon CQ{\bf p}$ \\
	\hline
	& & \\
	$271444,\ (121393,196418)$ & $1.10 \times 10^6$ & $3.01 \times 10^{7}$ \\
	$39603,\ (17711,28657)$ & 160545 & $642594$ \\
	$5778,\ (2584,4181)$ & 23423.2 & $13678.4 < {\bf p}_{2584}$ \\
	$3571,\ (1597,2584)$ & 14476.4 & $5224.68 < {\bf p}_{1597}$ \\
	$2207,\ (987,1597)$ & 8946.88 & $1995.65 < {\bf p}_{987}$ \\
	$1364,\ (610,987)$ & 5529.48 & $762.27 < {\bf p}_{610}$ \\
	& & \\
	\end{tabular}
	\caption{Simultaneous Diophantine approximations to the golden ratio $\phi$ associated to the golden polynomial, and specifications for the corresponding lattice string approximations.} \label{golden poly data}
\end{table}
\begin{figure}[p]
	\centering
	\begin{subfigure}{0.5\textwidth}
		\centering
  		\includegraphics[width=.80\textwidth]{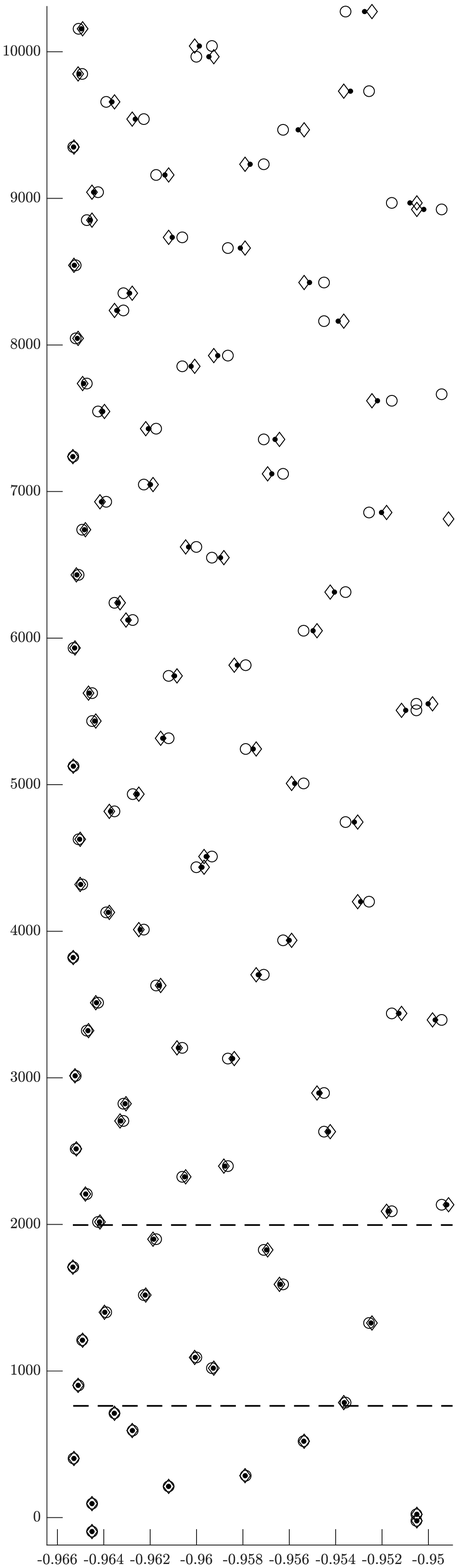} 
		\caption{} 
		\label{lsa_against_golden_left}
	\end{subfigure}%
	\begin{subfigure}{0.5\textwidth}
  		\centering
  		\includegraphics[width=.80\textwidth]{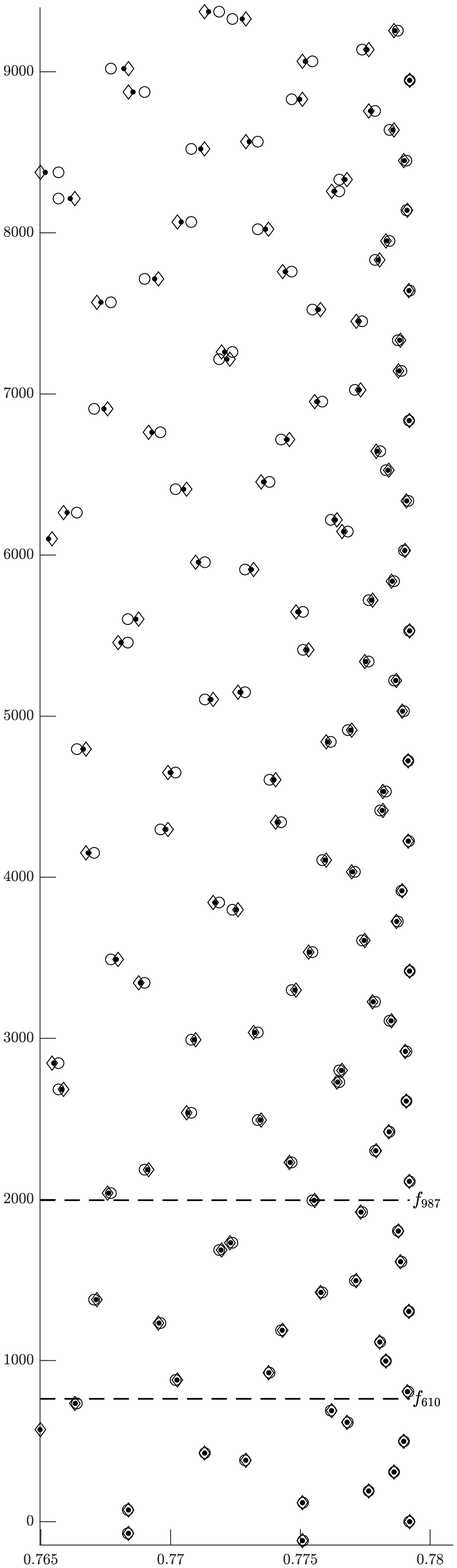} 
		\caption{} 
		\label{lsa_against_golden_right}
   	\end{subfigure}
   	\caption{Roots of the golden polynomial from Example \ref{golden example} near the vertical line (a) $\Re(z) = a$, (b) $\Re(z) = b$ (marked with dots), moving away from the roots of $f_{610}(s)$, $f_{987}(s)$ (marked with circles and diamonds, respectively).}
\label{lsa_against_golden}
\end{figure}

It is also of the form (\ref{type 1}), with $N = 2$ and $\alpha_2 = \phi$; see also the golden$+$ polynomial in, e.g., \cite[Section 3.2.2, pp. 73 -- 74]{lapidus2012fractal}. We compute lattice string approximations to the golden polynomial by generating convergents to the golden ratio. One way to do this, aside from using the continued fraction process, is to take the Fibonacci numbers
\[ 1, 1, 2, 3, 5, 8, 13, 34, 55, \dots \]
and divide each one, starting with the second, by the preceding number.

Table \ref{golden poly data} shows data for several lattice string approximations to the golden polynomial. Notice that we obtain fewer lattice string approximations to the golden polynomial with at least one period of stable roots than we did for the 2-3 polynomial in Example \ref{2-3 example}. On the other hand, our best lattice string approximation to the golden polynomial has roots that are stable for about twenty seven periods, whereas for the 2-3 polynomial studied in Example \ref{2-3 example}, our best one has roots that are stable for about sixteen periods. Figure \ref{lsa_against_golden} shows how the roots of the golden polynomial (marked with dots) are near the roots of the lattice string approximations $f_{610}(s)$ (marked with circles) and $f_{987}(s)$ (marked with diamonds), and how they eventually start to move away from them. We see again that the point at which the roots of the golden polynomial start to move away from the roots of $f_{610}(s)$ and $f_{987}(s)$ is consistent with the theoretical prediction. Figure \ref{golden large scale plot} shows stable roots of the lattice string approximation $f_{121393}(s)$, giving an impression of the quasiperiodic pattern on a larger scale. Observe how the roots constitute a slightly less regular pattern than the one in Figure \ref{2-3 large scale plot}. Also, we note that the best lattice string approximation to the golden polynomial in \cite{lapidus2003complex,lapidus2012fractal} was determined from the convergent $4181/2584$ to the golden ratio $\phi$.
\end{example}

\begin{example} \label{type1_2 example}
Consider the nongeneric nonlattice Dirichlet polynomial
\[ f(s) = 1 - (2^{-s} + 3^{-s} + 4^{-s})\cdot10^{-1} - 6^{-s}. \]
This example was inspired by the 2-3-4-6 polynomial
\[ g(s) = 1 - 2^{-s} - 3^{-s} - 4^{-s} - 6^{-s} \]
from \cite[Section 3.7.1, pp. 113, 115 -- 116]{lapidus2012fractal}, which is of the form (\ref{type 1}), with $N = 4$, $r = 2^{-1}$, $m_1 = m_2 = m_3 = 1$, and $\alpha_2 = \log_2(3)$, $\alpha_3 = 2$, $\alpha_4 = \alpha_2 + 1$. 
The LSA constant of $g(s)$ is of the order of $10^{-7}$, rendering the quasiperiodic pattern of its roots difficult to explore via the LSA algorithm.
\begin{table}[ht]
	\centering
	\begin{tabular}{l l l}
	$f(s) = 1 - (2^{-s} - 3^{-s} - 4^{-s})\cdot10^{-1} - 6^{-s}$ & & $C \approx 0.0001$ \\
	\hline
	\hline
	$Q,\quad (q,k_2)$ & ${\bf p}_q$ & $\varepsilon CQ{\bf p}$ \\
	\hline
	& & \\
	$192530,\ (111202,176251)$ & $1.00 \times 10^{6}$ & $2.55 \times 10^{6}$ \\
	$189140,\ (79335,125743)$ & 719150 & $1.78 \times 10^{6}$ \\
	$95410,\ (31867,50508)$ & 288865 & $362315$ \\
	$38096.3,\ (15601,24727)$ & 141419 & $70825 < {\bf p}_{15601}$ \\
	$15878.2,\ (665,1054)$ & 6028.04 & $1258.27 < {\bf p}_{665}$ \\
	$678.06,\ (306,485)$ & 2773.8 & $24.72 < {\bf p}_{306}$ \\
	$331.94,\ (53,84)$ & 480.43 & $2.09 < {\bf p}_{53}$ \\
	& & \\
	\end{tabular}
	\caption{This table shows simultaneous Diophantine approximations to the real number $\log_2(3)$ associated to the nonlattice Dirichlet polynomial from Example \ref{type1_2 example}, along with some specifications for the corresponding lattice string approximations.} \label{type1_2 poly data}
\end{table}

\begin{figure}[p] 
s\centering
\includegraphics[width=.65\textwidth]{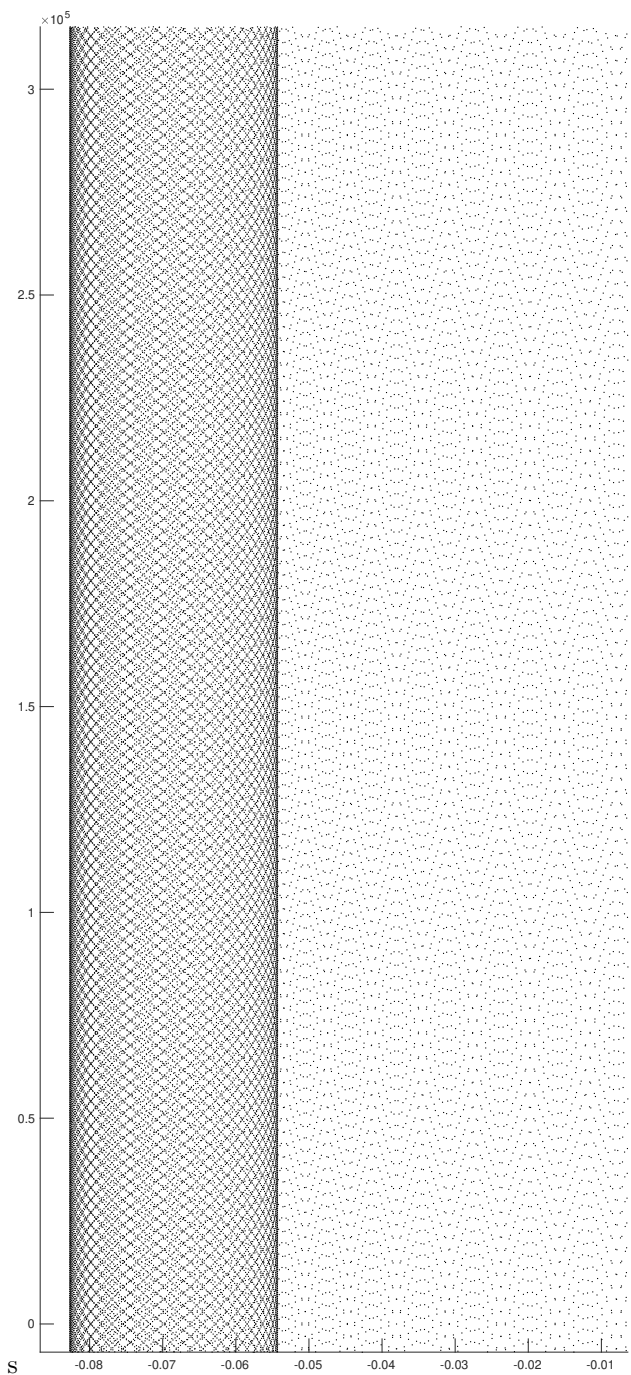} 
\caption{Stable roots from the lattice string approximations $f_{111202}(s)$ to the nongeneric nonlattice Dirichlet polynomial from Example \ref{type1_2 example}. See Table \ref{type1_2 poly data} for the corresponding data.}
\label{type1_2 large scale plot}
\end{figure}

\begin{figure}[ht]
	\centering
	\begin{subfigure}{0.3\textwidth}
		\centering
  		\includegraphics[width=.7\textwidth]{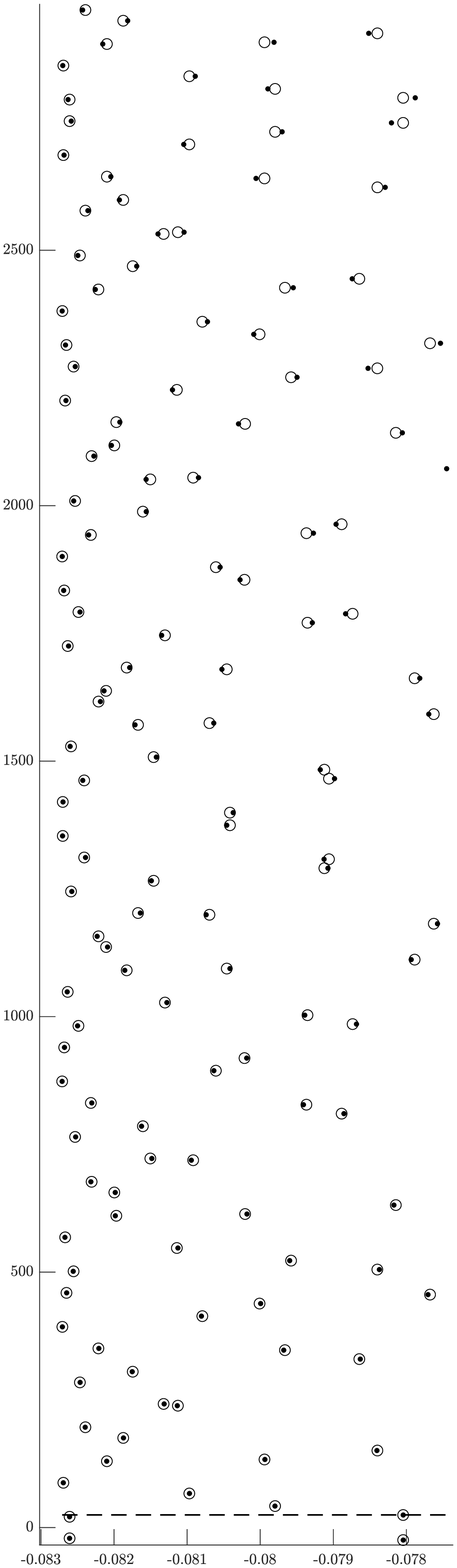} 
		\caption{} 
		\label{lsa_against_type1_2_left}
	\end{subfigure}%
	\begin{subfigure}{0.3\textwidth}
  		\centering
  		\includegraphics[width=.7\textwidth]{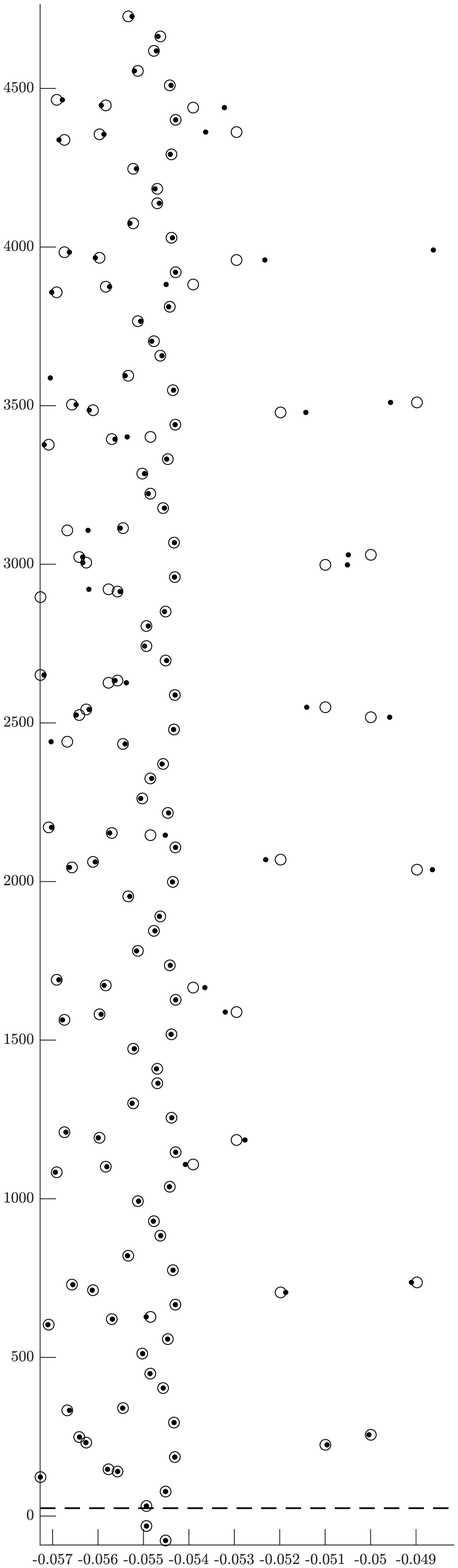} 
		\caption{} 
		\label{lsa_against_type1_2_middle}
   	\end{subfigure}%
	\begin{subfigure}{0.3\textwidth}
  		\centering
  		\includegraphics[width=.7\textwidth]{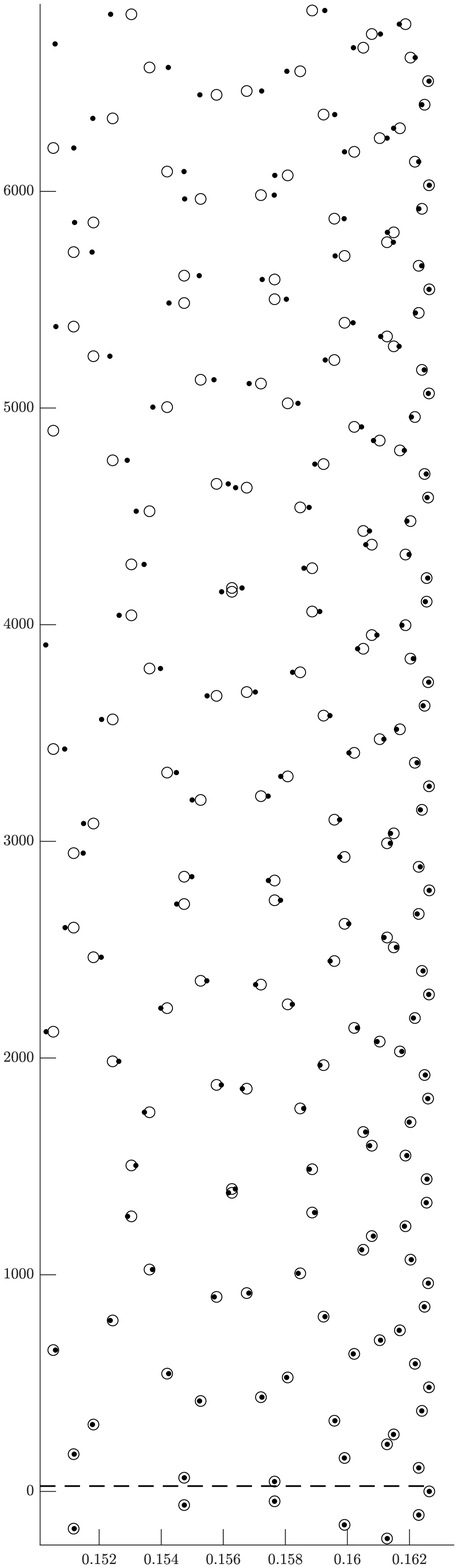} 
		\caption{} 
		\label{lsa_against_type1_2_right}
   	\end{subfigure}
   	\caption{Roots of $f(s)$ from Example \ref{type1_2 example} near the vertical line (a) $\Re(z) = a$, (b) $\Re(z) = -0.0543$, (c) $\Re(z) = b$ (marked with dots), moving away from the roots of $f_{306}(s)$ (marked with circles).}
\label{lsa_against_type1_2}
\end{figure}

\begin{table}[ht]
	\centering
	\begin{tabular}{l l l}
	$f(s) = 1 - 2^{-s} - 3^{-s} - 5^{-s} - 7^{-s}$ & & $C \approx 5.2 \times 10^{-9}$ \\
	\hline
	\hline
	$Q,\quad (q,k_2,k_3,k_4)$ & ${\bf p}_q$ & $\varepsilon CQ{\bf p}$ \\
	\hline
	& & \\
	$265.73,\ (103169,163519,239551,289632)$ & $935198$ & $0.13 < {\bf p}_{103169}$ \\
	$75.19,\ (18355,29092,42619,51529)$ & 166383 & $0.006 < {\bf p}_{18355}$ \\
	$39.53,\ (3125,4953,7256,8773)$ & 28327.3 & $0.0005 < {\bf p}_{3125}$ \\
	$22.97,\ (441,699,1024,1238)$ & 3997.54 & $4.8 \times 10^{-5} < {\bf p}_{441}$ \\
	$17.33,\ (171,271,397,480)$ & 1550.07 & $1.40 \times 10^{-5} < {\bf p}_{171}$ \\
	& & \\
	\end{tabular}
	\caption{This table shows simultaneous Diophantine approximations $Q$, $(q,k_2,k_3)$ to the real numbers $\log_2(3)$, $\log_2(5)$ associated to the nonlattice Dirichlet polynomial in Example \ref{2-3-5-7 example}, along with some specifications for their corresponding lattice string approximations.} \label{2-3-5-7 poly data}
\end{table}

By scaling the multiplicities $m_1$, $m_2$, $m_3$ by a factor of $10^{-1}$, we obtain the nongeneric nonlattice Dirichlet polynomial $f(s)$, with LSA constant $C = 0.0001$, calculated by using the formula (\ref{formula for C}) in Theorem \ref{type 1 theorem}. As a result, we obtain a more tractable version of the 2-3-4-6 polynomial. Table \ref{type1_2 poly data} shows data for several lattice string approximations to $f(s)$, and just like for the 2-3 polynomial in Example \ref{2-3 example}, we generate lattice string approximations by approximating $\log_2(3)$. We note, however, that they are not as good as the ones from Table \ref{2-3 poly data} because the LSA constant is smaller. Figure \ref{lsa_against_type1_2} displays the roots of $f(s)$ moving away from the roots of $f_{306}(s)$. We again observe that the roots remain close to the roots of $f_{306}(s)$ near the two extreme vertical lines $\Re(z) = a$ and $\Re(z) = b$, as shown in parts (a) and (c) of Figure \ref{lsa_against_type1_2}, respectively. However, we see in part (b) of Figure \ref{lsa_against_type1_2} some of the roots staying close to a third line, $\Re(z) = -0.0543$. Figure \ref{type1_2 large scale plot} shows one period of stable roots of the lattice string approximation $f_{111202}$, giving an impression of two distinct quasiperiodic patterns on each side of the vertical line $\Re(z) = -0.0543$. Note that while the part to the left is smaller than the part on the right, both parts consist of roughly the same number of roots, which may be inferred by observing that the roots in the left part seem to be more densely distributed than the ones in the right part. 
\end{example}

\subsection{Rank Three or More Examples} \label{section 4.3}
\hfill \\
In this section, we discuss examples with rank greater than two, which make use of our implementation of the LLL algorithm. While the authors of \cite{lapidus2003complex,lapidus2012fractal} did show two examples with rank greater than two, specifically, the 2-3-5 polynomial of rank three from, e.g., \cite[Section 3.6, pp. 108--109]{lapidus2012fractal} and the 2-3-5-7 polynomial of rank four from \cite[Section 3.7.1, pp. 113--114]{lapidus2012fractal}, the visual presentation of the roots was severely limited, due to the absence of tools like the LLL algorithm and MPSolve. Here, we plot the roots of the 2-3-5-7 polynomial, and also the roots of two new examples, all of which are of the form (\ref{type 1}). 

Recall that generating lattice string approximations to nonlattice Dirichlet polynomials with rank greater than two means generating simultaneous Diophantine approximations to two or more real numbers. The naive approach is to build off the process described in the second to last paragraph before Section \ref{section 4.2.1}, and simply generate a convergent to each irrational number and then find a common denominator. However, this process is extremely inefficient.  

\begin{figure}[p]
	\centering
	\begin{subfigure}{0.5\textwidth}
		\centering
  		\includegraphics[width=.80\textwidth]{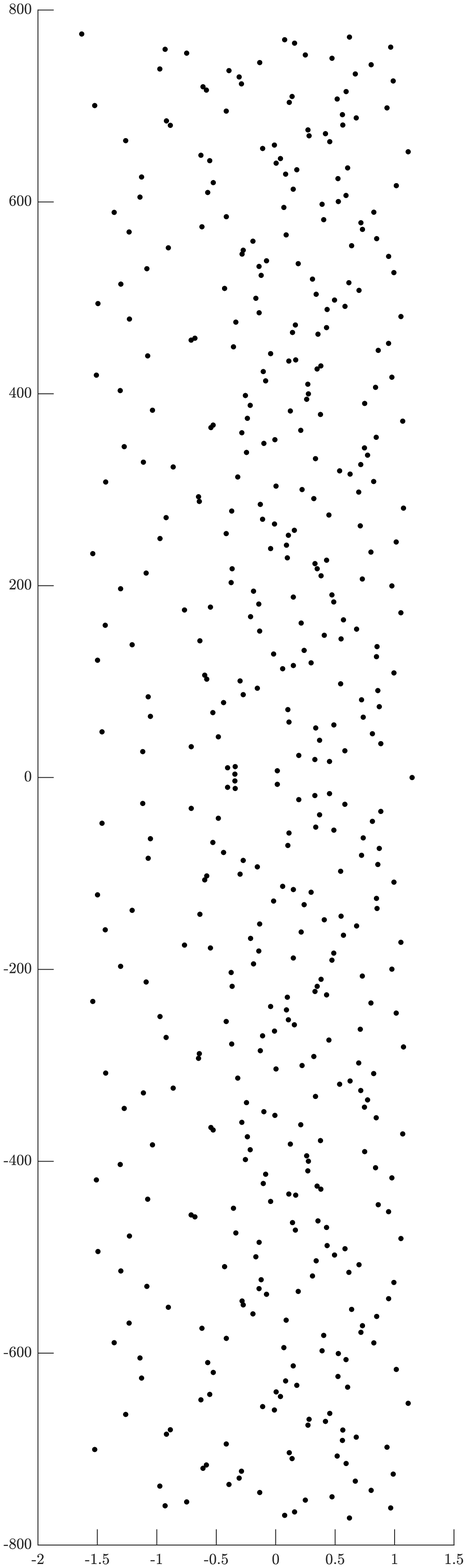} 
		\caption{} 
		\label{2-3-5-7_171}
	\end{subfigure}%
	\begin{subfigure}{0.5\textwidth}
  		\centering
  		\includegraphics[width=.80\textwidth]{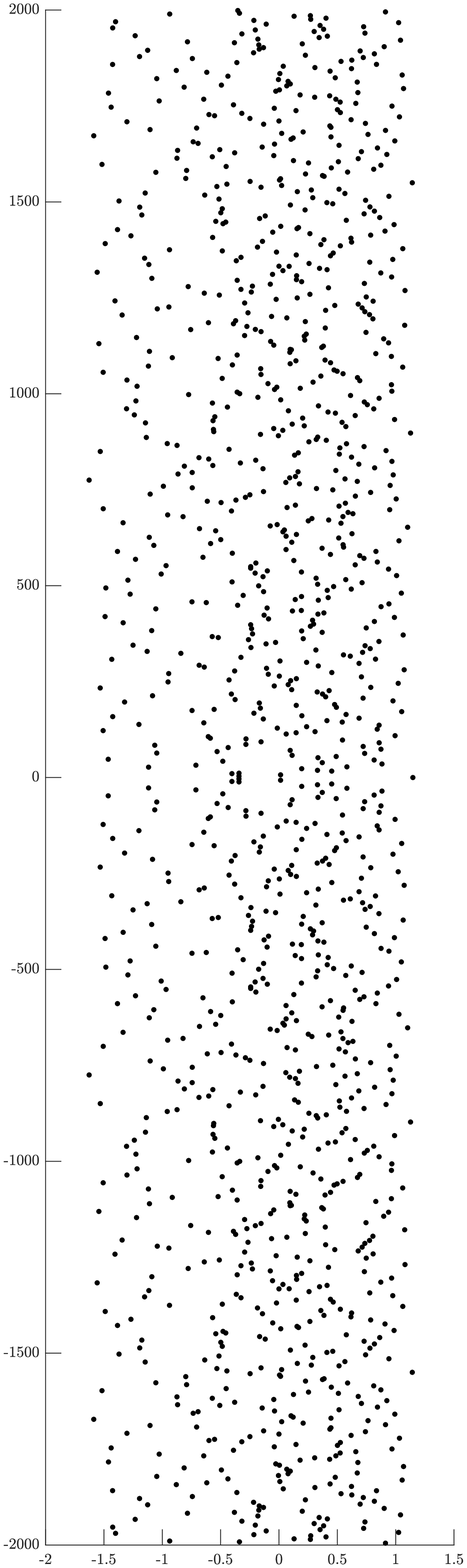} 
		\caption{} 
		\label{2-3-5-7_441}
   	\end{subfigure}
   	\caption{Roots of the lattice string approximations (a) $f_{171}(s)$, (b) $f_{441}(s)$, to the 2-3-5-7 polynomial from Example \ref{2-3-5-7 example}.}
\label{2-3-5-7_171_441}
\end{figure}

\begin{figure}[p]
	\centering
	\begin{subfigure}{0.5\textwidth}
		\centering
  		\includegraphics[width=.80\textwidth]{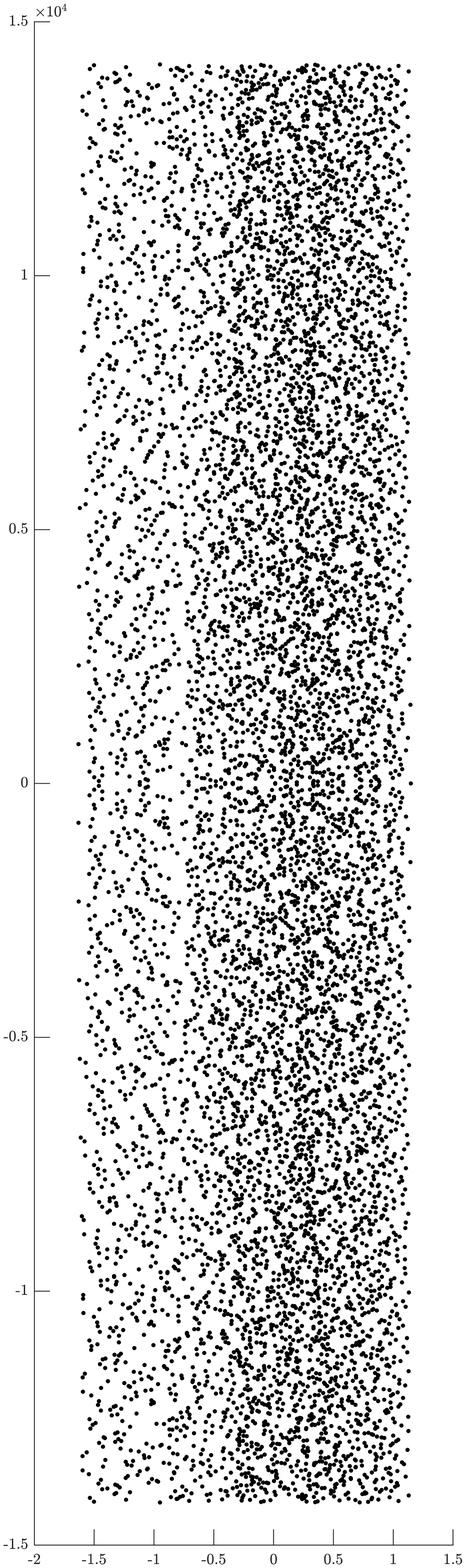} 
		\caption{} 
		\label{l2-3-5-7_3125}
	\end{subfigure}%
	\begin{subfigure}{0.5\textwidth}
  		\centering
  		\includegraphics[width=.80\textwidth]{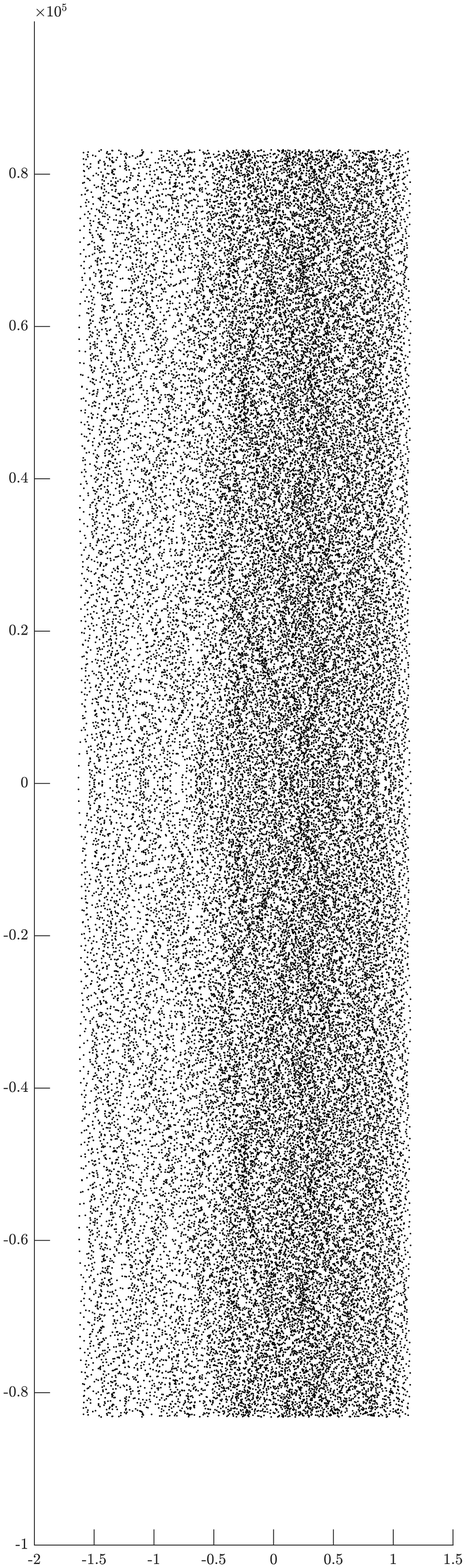} 
		\caption{} 
		\label{2-3-5-7_18355}
   	\end{subfigure}
   	\caption{Roots of the lattice string approximations (a) $f_{3125}(s)$, (b) $f_{18355}(s)$, to the 2-3-5-7 polynomial from Example \ref{2-3-5-7 example}.}
\label{2-3-5-7_3125_18355}
\end{figure}

\begin{example} \label{2-3-5-7 example}
The 2-3-5-7 polynomial
\[ f(s) = 1 - 2^{-s} - 3^{-s} - 5^{-s} - 7^{-s} \]
from \cite[Section 3.7.1, pp. 113 -- 114]{lapidus2012fractal} is a generic nonlattice example with rank four. It is of the form (\ref{type 1}), with LSA constant $C = 5.2 \times 10^{-9}$. Table \ref{2-3-5-7 poly data} shows the data for the lattice string approximations we computed, and we note that with such a small LSA constant, and because the LLL algorithm is not as efficient as the continued fraction process, we cannot even get a single lattice string approximation with roots that are stable for at least one period. Figure \ref{2-3-5-7_171_441} and Figure \ref{2-3-5-7_3125_18355} show the roots of each lattice string approximation from Table \ref{2-3-5-7 poly data}.
\end{example}

\begin{example} \label{3-4-13 example}

\begin{figure}[p] 
\centering
\includegraphics[width=.70\textwidth]{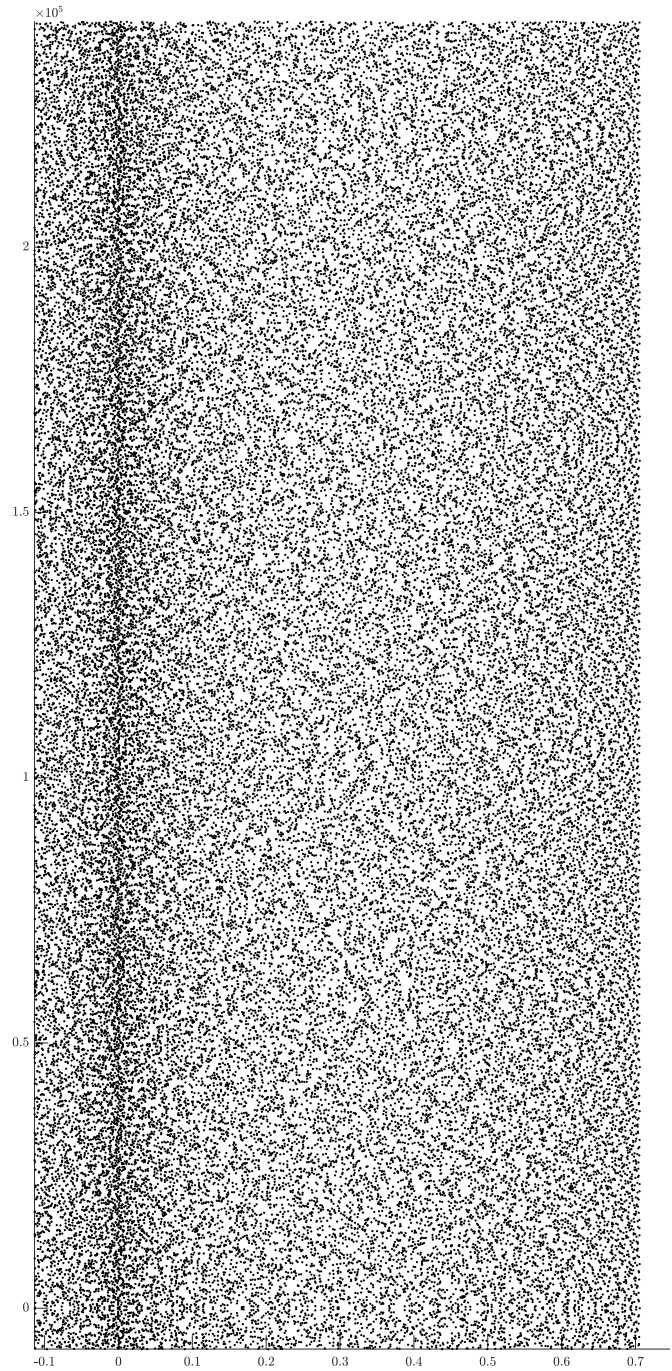} 
\caption{Stable roots from the lattice string approximations $f_{85248}(s)$ to the 3-4-13 polynomial from Example \ref{3-4-13 example}. See Table \ref{3-4-13 poly data} for the corresponding data.}
\label{3-4-13 large scale plot}
\end{figure}

The 3-4-13 polynomial
\[ f(s) = 1 - 3^{-s} - 4^{-s} - 13^{-s} \]
is a generic nonlattice example with rank three. It is of the form (\ref{type 1}), with LSA constant $C = 0.0007$; see Table \ref{3-4-13 poly data} for the data on the lattice string approximations we computed. Figure \ref{3-4-13 large scale plot} displays the roots of the lattice string approximation $f_{85248}(s)$. Observe how irregular the roots are, when compared to the other examples shown in the rest of Section \ref{section 4}. Also, note the dense pocket of roots near the imaginary axis.
\end{example}

\begin{table}[h]
	\centering
	\begin{tabular}{l l l}
	$f(s) = 1 - 3^{-s} - 4^{-s} - 13^{-s}$ & & $C \approx 0.0007$ \\
	\hline
	\hline
	$Q,\quad (q,k_2,k_3)$ & ${\bf p}_q$ & $\varepsilon CQ{\bf p}$ \\
	\hline
	& & \\
	$1111.13,\ (85248,107571,199030)$ & $487551$ & $39943.5 < {\bf p}_{85248}$ \\
	$400.52,\ (7947,10028,18554)$ & 45450.5 & $1342.23 < {\bf p}_{7947}$ \\
	$185.72,\ (4090,5161,9549)$ & 23391.5 & $320.33 < {\bf p}_{4090}$ \\
	$75.38,\ (233,294,544)$ & 1332.57 & $7.4 < {\bf p}_{233}$ \\
	& & \\
	\end{tabular}
	\caption{This table shows simultaneous Diophantine approximations $Q$, $(q,k_2,k_3)$ to the real numbers $\log_3(4)$, $\log_3(13)$ associated to the nonlattice Dirichlet polynomial in Example \ref{3-4-13 example}, along with some specifications for their corresponding lattice string approximations.} 
	\label{3-4-13 poly data}
\end{table}

\begin{figure}[p]
	\centering
	\begin{subfigure}{0.5\textwidth}
  		\centering
  		\includegraphics[width=.80\textwidth]{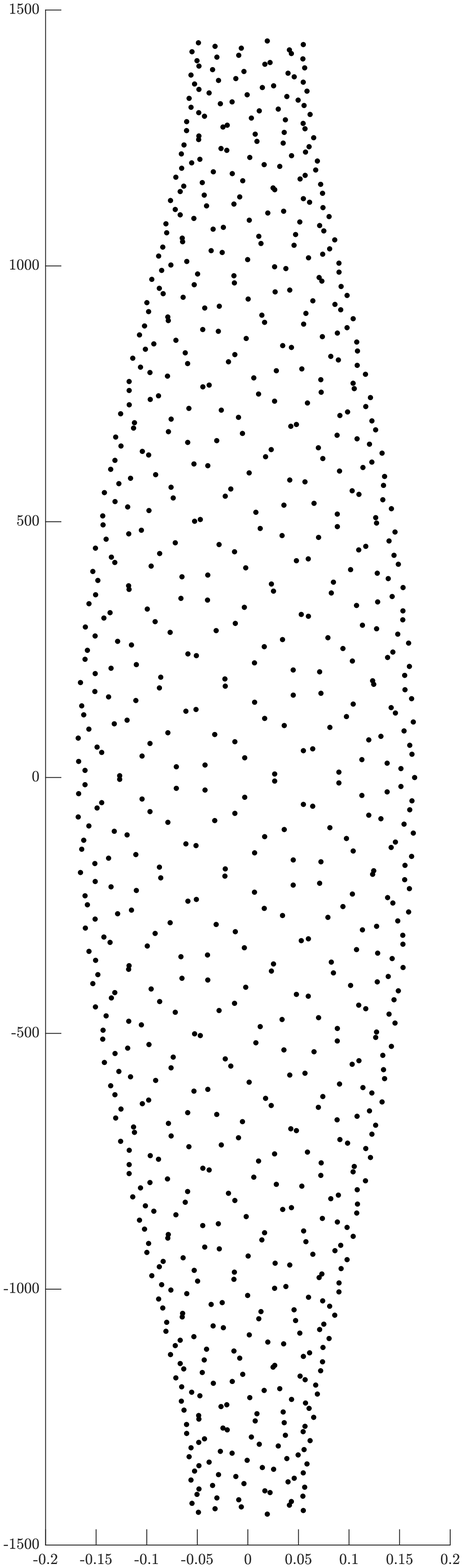} 
		\caption{} 
	\end{subfigure}%
	\begin{subfigure}{0.5\textwidth}
  		\centering
  		\includegraphics[width=.80\textwidth]{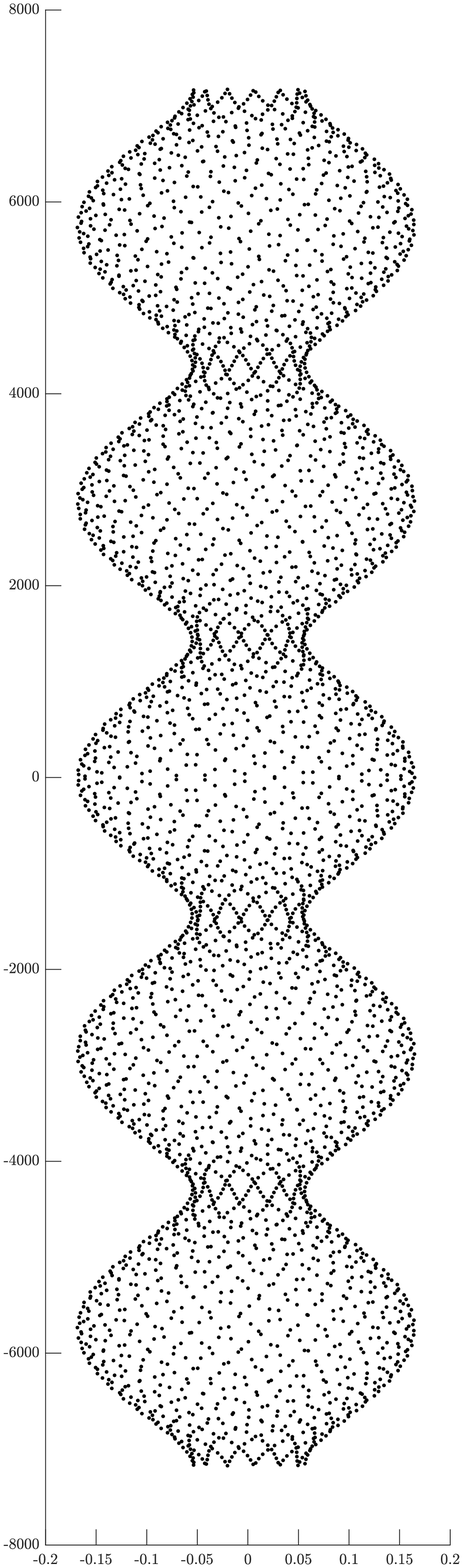} 
		\caption{}
	\end{subfigure}
\caption{Roots of the lattice string approximations (a) $f_{318}(s)$, (b) $f_{1583}(s)$, to $f(s)$ from Example \ref{type1_3 example}.}
\label{type1_3 large scale plot}
\end{figure}

\begin{figure}[p]
	\centering
	\begin{subfigure}{0.5\textwidth}
  		\centering
  		\includegraphics[width=.80\textwidth]{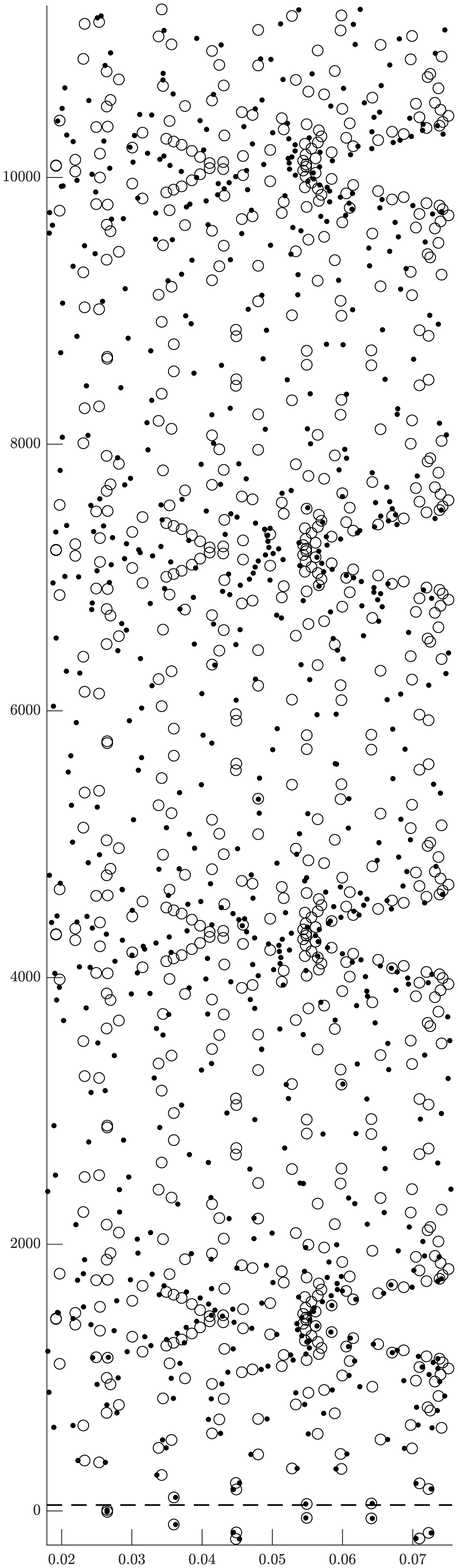} 
		\caption{} 
	\end{subfigure}%
	\begin{subfigure}{0.5\textwidth}
  		\centering
  		\includegraphics[width=.80\textwidth]{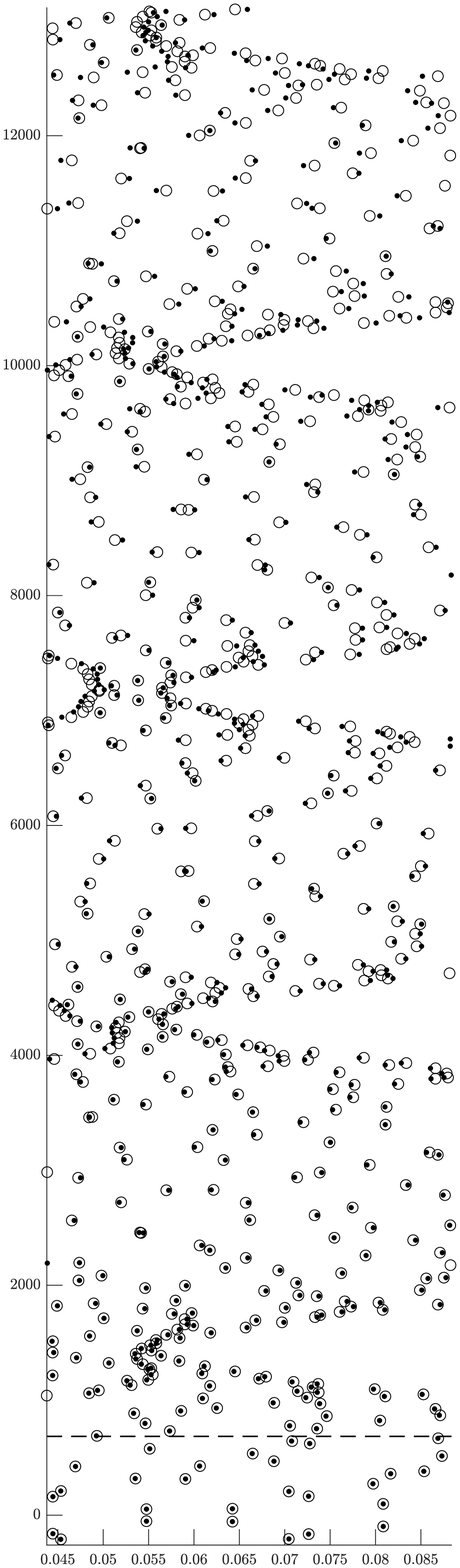} 
		\caption{}
	\end{subfigure}
\caption{Roots of $f(s)$ from Example \ref{type1_3 example} (marked with dots) moving away from the roots of (a) $f_{318}(s)$, (b) $f_{1583}(s)$ (marked with circles). The dotted line indicates the theoretical region of stability.}
\label{lsa_against_type1_3}
\end{figure}

\begin{example} \label{type1_3 example}
Let $a > 0$ be an irrational number, and consider the nonlattice Dirichlet polynomial
\[ f(s) = 1 - m_12^{-s} - m_22^{-\alpha_2s} - m_32^{-\alpha_3s} - 2^{-\alpha_4s}, \]
where $\alpha_2 > 1$ is irrational, $\alpha_3 = \alpha_2 + a$, and $\alpha_4 = \alpha_3 + 1$. 

\begin{table}[ht]
	\centering
	\begin{tabular}{l l l}
	$f(s) = 1 - m_12^{-s} - m_22^{-\alpha_2s} - m_32^{-\alpha_3s} - 2^{-\alpha_4s}$ & & $C \approx 0.002$ \\
	\hline
	\hline
	$Q,\quad (q,k_2,k_3)$ & ${\bf p}_q$ & $\varepsilon CQ{\bf p}$ \\
	\hline
	& & \\
	$997.5,\ (100562,159387,318)$ & $911566$ & $252396 < {\bf p}_{100562}$ \\
	$537.25,\ (46803,74181,148)$ & 424256 & $63267 < {\bf p}_{46803}$ \\
	$283.56,\ (6956,11025,22)$ & 63054.2 & $4962.89 < {\bf p}_{6956}$ \\
	$172.10,\ (1583,2509,5)$ & 14349 & $685.48 < {\bf p}_{1583}$ \\
	$55.32,\ (318,504,1)$ & 2882.58 & $44.26 < {\bf p}_{318}$ \\
	& & \\
	\end{tabular}
	\caption{This table shows simultaneous Diophantine approximations $Q$, $(q,k_2,k_3)$ to the real numbers $\log_2(3)$ and $\log_2(3) + a$ associated to the nonlattice Dirichlet polynomial in Example \ref{type1_3 example}, along with some specifications for their corresponding lattice string approximations.} \label{type1_3 poly data}
\end{table}

Choosing $m_1 = m_2 = m_3 = 0.10$, $a = 1/\sqrt{100003}$, and $\alpha_2 = \log_2(3)$, we obtain a nongeneric nonlattice Dirichlet polynomial of the form (\ref{type 1}), with rank three and with LSA constant $C = 0.002$; see Table \ref{type1_3 poly data} for data on the lattice string approximations we computed. Figure \ref{type1_3 large scale plot} shows plots of the roots of two lattice string approximations, giving a totally new large scale impression of the quasiperiodic pattern. Figure \ref{lsa_against_type1_3} shows the roots of lattice string approximations moving away from the roots of $f(s)$.
\end{example}

\section{Concluding Remarks} \label{section 5}
We close this paper by a few comments concerning future research directions motivated, in part, by the present work and by earlier work in \cite[Chapter 3]{lapidus2012fractal} (about the quasiperiodic patterns of the complex dimensions of nonlattice self-similar fractal strings), as well as by \cite{lapidus2008in}. We limit ourselves here to a few questions concerning those matters. 

While this is not a topic discussed in the present paper, it is worthwhile to mention that the LSA algorithm from \cite{lapidus2000fractal,lapidus2003complex,lapidus2006fractal,lapidus2012fractal} also establishes a deep connection to the theory of mathematical quasicrystals; for such quasicrystals, see, e.g., \cite{axel2000from}, \cite{baake2013aperiodic}, \cite{lagarias2000mathematical}, \cite{lapidus2008in}, \cite{senechal1996quasicrystals}, and the relevant references therein. See also, especially, the open problem formulated in \cite{lapidus2012fractal} (cf. \cite[Problem 3.22]{lapidus2012fractal}, restated in Problem \ref{open problem} below) asking to view the quasiperiodic set of complex dimensions of a nonlattice self-similar string (or, more generally, the set of zeros of a nonlattice Dirichlet polynomial) as a generalized mathematical quasicrystal; this problem was recently addressed by the third author in his Ph.~D.\ thesis (see Chapter 4 of \cite{voskanian2019on}), and it will be the topic of a future joint paper, \cite{lapidus2020diffraction}.

\begin{problem}\textup{(\cite[p. 86]{lapidus2006fractal}, \cite[Problem 3.22, p. 89]{lapidus2012fractal}).} \label{open problem}
Is there a natural way in which the quasiperiodic pattern of the sets of complex roots of nonlattice Dirichlet polynomials can be understood in terms of a notion of a suitable \textup(generalized\textup) quasicrystal or of an associated quasiperiodic tiling?
\end{problem}

Generalized quasicrystals of complex dimensions also play an important role in a related, but significantly more general context, in the book by the first author, \cite{lapidus2008in}.
 
Aspects of Theorem \ref{approximation theorem} (\cite[Theorem 3.18, p. 84]{lapidus2012fractal}) need to be refined. While this theorem is very helpful, in particular, for explaining when the roots of $f(s)$ start to move away from the roots of a lattice string approximation, it does not explain why they stay close near certain lines for much longer. This would suggest, among other things, that the density results and plots concerning the roots of nonlattice Dirichlet polynomials obtained in \cite[Section 3.6]{lapidus2012fractal} can be improved, at least in certain cases. 

Along similar lines, can we explain the observations made in Example \ref{type1_2 example} regarding the concentration of the roots near the extreme vertical lines $\Re(z) = a$ and $\Re(z) = b$, as well as near a third vertical line (here, the line $\Re(z) = -0.0543$); see, especially, Figure \ref{lsa_against_type1_2}. How general is this phenomenon?

Another, rather puzzling, phenomenon is the appearance in both parts of Figure \ref{type1_3 large scale plot} (about Example \ref{type1_3 example}) of very regular patterns and shapes. Can we find a good mathematical explanation for this new phenomenon? Furthermore, can we predict when related phenomena will arise and what types of shapes or patterns can be expected?

Based on the quasiperiodic patterns described in Section \ref{section 4} of the present paper, it seems reasonable to expect that the patterns for the roots of a nonlattice Dirichlet polynomial exhibit the property of {\it repetitivity} often used to qualify one of the features of mathematical quasicrystals; see, e.g., \cite{axel2000from}, \cite{baake2013aperiodic}, \cite{lagarias2000mathematical}, \cite{senechal1996quasicrystals}, \cite[Appendix F]{lapidus2008in} and the relevant references therein. It would be interesting to precisely formulate and then prove the corresponding statement in our present situation, at least for a suitable class of nonlattice Dirichlet polynomials (and, in particular, of nonlattice self-similar fractal strings). One may also ask a similar question regarding the property of {\it finite local complexity}; for the latter geometric property, see, e.g., {\it ibid}, including \cite[Definition 1.2]{lagarias2000mathematical}.

Understanding the diffraction measures and patterns of sets of roots of lattice and, eventually, of nonlattice, Dirichlet polynomials is the object of two papers in preparation by the authors (\cite{lapidus2020diffraction}, along with its sequel). For now, our main result in this context, to be discussed in \cite{lapidus2020diffraction}, concerns the lattice case. However, a conjecture of the first author, along with the LSA algorithm from \cite{lapidus2012fractal} (described in Theorem \ref{approximation theorem} above), may likely provide a clue as to what the general nonlattice case should entail. The numerical and graphical results obtained in the present work should also be helpful in this endeavor. 

\section{Acknowledgments} \label{acknowledgments}
The authors of this paper acknowledge use of the ELSA high performance computing cluster at The College of New Jersey for conducting the research reported in this paper. This cluster is funded in part by the National Science Foundation under grant numbers OAC-1826915 and OAC-1828163. The work of Michel L. Lapidus was partially supported by the National Science Foundation (under grant DMS-110775) and by the Burton Jones Endowed Chair in Pure Mathematics of which he is the Chair holder at the University of California, Riverside. We also acknowledge use of the multiprecision polynomial solver MPSolve, which we used to generate all of the plots in the current paper. 


\end{document}